\newcommand{\p}{\partial}
\newcommand{\e}{\varepsilon}
\newcommand{\ep}{\epsilon}
\newcommand{\C}{{\mathbb C}}
\newcommand{\R}{{\mathbb R}}
\newcommand{\Z}{{\mathbb Z}}
\newcommand{\cC}{{\mathcal C}}
\newcommand{\cF}{{\mathcal F}}
\newcommand{\cX}{{\mathcal X}}
\newcommand{\cO}{{\mathcal O}}
\newcommand{\fu}{{\mathbf u}}
\newcommand{\fx}{{\mathbf x}}
\newcommand{\bh}{{\bar{h}}}
\newcommand{\rw}{{\mathring{w}}}
\newcommand{\ru}{{\mathring{u}}}
\newcommand{\rv}{{\mathring{v}}}
\newcommand{\rh}{{\mathring{h}}}
\newcommand{\rA}{{\mathring{A}}}
\newcommand{\sgn}{{\rm sgn}}
\newcommand{\norm}[1]{{\big\|#1\big\|}}
\newtheorem{thm}{Theorem}
\newtheorem{lem}{Lemma}
\newtheorem{prop}{Proposition}
\newtheorem{rem}{Remark}
\numberwithin{equation}{section}
\numberwithin{thm}{section}
\numberwithin{lem}{section}
\numberwithin{prop}{section}
\numberwithin{cor}{section}
\numberwithin{rem}{section}
\numberwithin{assump}{section}
\numberwithin{definition}{section}
\title[Steady triple-deck equations]{On the classical solution for the steady triple-deck equations}
\author[M. Dong]{Ming Dong}
\address{Institute of Mechanics, Chinese Academy of Sciences}
\email{dongming@imech.ac.cn} 
\author[C. Wang]{Chao Wang}
\address{School of Mathematical Sciences, Peking University}
\email{wangchao@math.pku.edu.cn} 
\author[Q. Wu]{Qin Wu}
\address{School of Mathematical Sciences, Peking University}
\email{q.wu@stu.pku.edu.cn} 
\author[Z. Zhang]{Zhifei Zhang}
\address{School of Mathematical Sciences, Peking University}
\email{zfzhang@math.pku.edu.cn} 
\date{\today}
\begin{document}

	\begin{abstract}
		
		This paper establishes the existence and uniqueness of classical solutions to the steady Triple-Deck equations, which describe incompressible boundary layer flow over localized roughness at high Reynolds numbers.  
		The triple-deck theory was developed to overcome the Goldstein singularity in classical Prandtl boundary layer theory, capturing the interaction between the viscous sublayer, main layer, and upper layer when surface roughness of height $O({\rm Re}^{-\frac58})$  is present.  The key ingredients of this paper include: (1) A decomposition separating roughness effects from nonlinear difficulties; 
		(2) A novel Green's function using Airy functions that overcomes low-frequency singularities via the non-vanishing property of $\sqrt{3}Ai(z)+Bi(z)$; 
		(3)  The introduction of weighted Sobolev norms $\norm{|\p_x|^{\frac{1}{18}}y^{\frac16}\omega}_{L^2}$ of the vorticity  yielding $M$-independent estimates for displacement  $A$.
		As a byproduct, local uniqueness of Couette flow is established when $F=0$.      
	\end{abstract}
	\maketitle

	\section{Introduction}

	\subsection{The Triple-Deck equations}
	
	In 1904, Prandtl \cite{Prandtl1904} proposed the boundary layer theory to address attached flow problems, achieving remarkable success in fluid dynamics research. However, when dealing with flow separation induced by adverse pressure gradients, this theory encounters singularity issues. Notably, Landau and Lifshitz \cite{LL1944} revealed that the velocity component normal to the body surface grows unboundedly (termed {\it the Goldstein singularity}), which hinders the solution from being further extended to the reverse-flow region \cite{Goldstein1948}. To overcome these difficulties, scholars including Messiter \cite{Messiter1970} and Stewartson \cite{Stewartson1969} developed the triple-deck theory from the late 1960s to the early 1970s. In this theory, three distinct layers—a viscous sublayer, a main layer, and an upper layer—appear in the transverse direction, for which uniformly valid approximate solutions are derived by the asymptotic matched method. 
	
	The 2D steady incompressible Navier-Stokes equations take the following dimensionless form:
	\begin{equation}\label{Incompressible NS0}
		\left\{\begin{aligned}
			&\fu_{NS}\cdot\nabla\fu_{NS} - \ep^8\Delta\fu_{NS} + \nabla p_{NS} = 0,\\
			&\nabla\cdot\fu_{NS} = 0.
		\end{aligned}\right.
	\end{equation}
	Here $(u_{NS},v_{NS})$ denotes the velocity vector, $p_{NS}$ pressure, and $\ep = {\rm Re}^{\frac18}$ with the Reynolds number ${\rm Re}\gg1$. In physics, the triple-deck structure emerges when localized surface roughness of height $\cO(\ep^5)$ is present. We then denote it as 
	$$
	\Big \{(X,Y)\big|\ Y = \ep^5F\big(\frac{X-1}{\ep^3}\big),\ X\in\R\Big\}
	$$
	for some given function $F=\cO(1)$ as $\ep\to0$. Consequently, the roughness-induced mean-flow distortion exhibits three distinct layers in the transverse direction, spanning in the $\cO(\ep^3)$ vicinity of the roughness (see figure \ref{fig0-3}).
	\begin{figure}[h]
		\centering
		\includegraphics[scale=0.35]{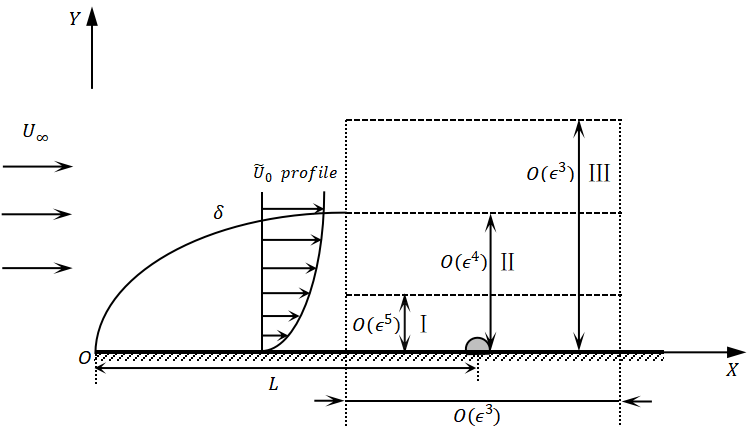}
		\caption{Demonstration of the triple-deck structure (not to scale): I   Viscous Sublayer; II  Main Layer;  III Upper Layer; $\delta$ Thickness of Prandtl layer.}
		\label{fig0-3}
	\end{figure}
	We demonstrate the scales of $(u_{NS},v_{NS},p_{NS})$ in Table \ref{tab1}.
	\begin{table}[h]
		\centering
		\begin{tabular}{|c|c|c|c|}
			\hline
			& I & II & III\\
			\hline
			$u_{NS}$ & $\cO(\ep)$ & $\tilde{U}_0(\frac{Y}{\ep^{4}}) + \cO(\ep)$ & $1 + \cO(\ep^2)$\\
			\hline
			$v_{NS}$ & $\cO(\ep^3)$ & $\cO(\ep^2)$ & $\cO(\ep^2)$\\
			\hline
			$p_{NS}$ & $\cO(\ep^2)$ & $\cO(\ep^2)$ & $\cO(\ep^2)$\\
			\hline
		\end{tabular}
		\caption{Scales of velocities and pressure in the triple-deck structure: $\tilde{U}_0$ Streamwise velocity in Prandtl layer.}
		\label{tab1}
	\end{table}
	\par Based on the scaling relations in Figure \ref{fig0-3} and Table \ref{tab1}, and via the method of matched asymptotic expansions applied to \eqref{Incompressible NS0} (see Appendix for details), we derive the governing equations for the viscous sublayer (region I) as
	\begin{equation}\label{Initial Eq0-1}
		\left\{\begin{aligned}
			&u_*\p_{x_*}u_* + v_*\p_{y_*}u_* + \frac{dP_*}{dx_*} - \p_{y_*}^2u_* = 0,\\
			&\p_{x_*}u_* + \p_{y_*}v_* = 0.
		\end{aligned}\right.
	\end{equation}
	Here $y_* = \frac{Y}{\ep^5}$ and $(\ep u_*,\ep^3 v_*, \ep^2P_*)$ are the leading-order terms of $(u_{NS}, v_{NS},p_{NS})$. The velocity field is subject to the no-slip, non-penetration conditions,
	\begin{equation}\label{BC0-1}
		(u_*,v_*)\big|_{y_* = F(x_*)} = 0.
	\end{equation}
	In the upstream limit, the velocity approaches the unperturbed Blasius profile,
	\begin{equation}\label{BC1-1}
		(u_* - y_*)\big|_{x_*\downarrow-\infty} = 0,\quad P_*\big|_{x_*\downarrow-\infty} = 0.
	\end{equation}
	To close the system, the upper boundary condition of the sublayer equations is enforced by matching with the main layer solution (see Appendix for details). Then we can derive the leading-order pressure as follows
	\begin{equation}\label{B.C. for viscous layer bounded0-0}
		P_* = -\frac{1}{\pi} \text{P.V.}\int_{\R}\frac{A_*'(\zeta)}{\zeta-x_*}d\zeta.
	\end{equation}
	And the upper boundary condition of $u_*$ is
	\begin{equation}\label{B.C. for viscous layer bounded0-1}
		( u_{*}(x_*,y_*) - y_*)\big|_{y_*=f(x_*)} = A_*(x_*).
	\end{equation}
	Here $f$ is given (to be determined later). Consequently, 
	by \eqref{Initial Eq0-1}-\eqref{B.C. for viscous layer bounded0-1}, we derive the closed system for the viscous sublayer as
	\begin{equation}\label{Initial Eq}
		\left\{\begin{aligned}
			&u_*\p_{x_*}u_* + v_*\p_{y_*}u_* + \frac{dP_*}{dx_*} - \p_{y_*}^2u_* = 0,\quad F(x_*)<y_*<f(x_*), x_*\in\R,\\
			&\p_{x_*}u_* + \p_{y_*}v_* = 0,\quad F(x_*)<y_*<f(x_*), x_*\in\R,\\
			&(u_*,v_*)\big|_{y_* = F(x_*)} = 0,\quad (u_{*}(x_*,y_*) - y_*)\big|_{y_*=f(x_*)} = A_*(x_*),\quad x_*\in\R,\\
			&(u_* - y_*)\big|_{x_*\downarrow-\infty} = 0,\quad P_*\big|_{x_*\downarrow-\infty} = 0,\quad F(-\infty)<y_*<f(-\infty),\\
			&P_* = -\frac{1}{\pi} \text{P.V.}\int_{\R}\frac{A_*'(\zeta)}{\zeta-x_*}d\zeta.
		\end{aligned}\right.
	\end{equation}
	The system  \eqref{Initial Eq} is referred to as the Triple-Deck equations.
	
	Recall that the Prandtl equations of classical boundary layers take
	\begin{equation}\label{Prandtl eq}
		\left\{\begin{aligned}
			&u_*\p_{x_*}u_* + v_*\p_{y_*}u_* + \frac{dP_*}{dx_*} - \p_{y_*}^2u_* = 0,\quad \text{in }(0,L_0)\times\R_+,\\
			&\p_{x_*}u_* + \p_{y_*}v_* = 0,\quad \text{in }(0,L_0)\times\R_+,\\
			&(u_*,v_*)\big|_{y_* = 0} = 0,\quad x_*\in\R_+,\\
			&u_{*}\big|_{x_*=0}=u_0,\quad y_*\in\R_+,
		\end{aligned}\right.
	\end{equation}
	where $u_0$ is prescribed. Mathematically, the Triple-Deck equations share a structural resemblance to the Prandtl equations \eqref{Prandtl eq}, yet they differ fundamentally in domain and pressure determination. The Prandtl equations are defined over $(0, L_0) \times \mathbb{R}_+$, with pressure $P_*$ dictated by external potential flow $u_E(x_*)$ via the Bernoulli relation 
	\begin{align}\label{P-P}
		P_* + \frac{1}{2} u_E^2=\text{Const,}
	\end{align}
	which means that the pressure is a given function.  While, the pressure of Triple-Deck model is given by \eqref{B.C. for viscous layer bounded0-0} with the unknown function $A_*$.
	
	The existence and uniqueness of classical solutions to the Prandtl equations were first established by Oleinik \cite{Oleinik1963}; subsequent decades have yielded rich results on well-posedness, asymptotic behavior, and regularity \cite{GI2021,Iyer2020,MS1985,SG2016,Serrin1967,WZ2021,WZ2023}.  Moreover, flow separation and reverse-flow phenomena have attracted considerable attention from many scholars \cite{DM2019,IM2022,MS1984,SWZ2021}.
	
	When $F=0$, it is easy to see that $(u_*,v_*,P_*,A_*)=(y,0,0,0)$ is a trivial solution to \eqref{Initial Eq}. In a recent work \cite{IM2024}, Iyer and Maekawa  proved the rigidity of this trivial solution under certain assumptions on the solution.  However, physicists found that the triple-deck structure only appears when $F\not\equiv0$. Since the $A_*$ remains unknown in this case, finding a non-trivial solution becomes a highly non-trivial problem. Numerous results are available on physical analysis and numerical simulations; see \cite{BD1988,El2010,KZR2005,Smith1973,WD2016}. Yet, if $F\not\equiv0$, the well-posedness of \eqref{Initial Eq} still lacks rigorous mathematical justification. For the unsteady case, Iyer and Vicol \cite{IV2021} and Gérard-Varet et al.\ \cite{GIM2023} established well-posedness in real-analytic spaces, whereas Dietert and Gérard-Varet \cite{DG2022} exhibited instability for some smooth initial data, indicating that the analytic class is optimal for well-posedness.    
	
	\subsection{Main results} 
	To simplify the system \eqref{Initial Eq}, we introduce the new coordinate $(x, y)$ and new unknowns $(u, v, P, A)$: 
	\begin{equation}\label{Prandtl transformation}
		\begin{aligned}
			&x=x_*,\ y = y_* - F(x_*),\\
			&(u_*,v_*,P_*,A_*) = (y + u, v + (y + u)\frac{dF}{dx}, P, A).
		\end{aligned}
	\end{equation}
	Then the problem \eqref{Initial Eq} can be reduced to
	\begin{equation}\label{Eqs viscous1}
		\left\{\begin{aligned}
			&y\p_xu+v + \p_x|\p_x|A - \p_y^2u = - u\p_xu - v\p_yu,\quad (x,y)\in\R\times[0,M],\\
			&\p_xu + \p_yv = 0,\quad (x,y)\in\R\times[0,M],\\
			&(u,v)\big|_{y=0}=(0,0),\quad u\big|_{y=f(x)-F(x)} =  A(x) + F(x),\quad x\in\R,\\
			&\lim_{x\to-\infty}u = 0,\quad 0<y<f(-\infty)-F(-\infty).
		\end{aligned}\right.
	\end{equation} 
	
	For convenience, we take $f(x) = M + F(x)$ for $M>0$. 
	We introduce the vorticity $\omega = \p_yu$ and
	\begin{equation}
		u(x,y) = I_{y}[\omega]:= \int_0^{y}\omega dy'.
	\end{equation}
	Differentiating the first equation of \eqref{Eqs viscous1} in the variable $y$, we obtain the perturbation system about $(\omega,A)$ as follows
	\begin{equation}\label{Perturbation system0}
		\left\{\begin{aligned}
			&y\p_x\omega - \p_y^2\omega = -u\p_x\omega - v\p_y\omega,\quad (x,y)\in\R\times[0,M],\\
			&u = I_{y}[\omega],\quad v = -\p_xI_{y}[u],\quad (x,y)\in\R\times[0,M],\\
			&\p_y\omega\big|_{y=0} = \p_x|\p_x|A,\quad I_{M}[\omega] = A(x) + F(x),\quad x\in\R.
		\end{aligned}\right.
	\end{equation}
	Here $F$ is a prescribed function. We denote $\Omega_M=\R\times[0,M]$. It is easy to see that the Couette flow $(u_*,v_*,P_*,A_*)$ corresponds to the trivial solution $(u,v,P,A)=(0,0,0,0)$ when $F=0$. 
	
	Now we introduce the functional framework of classical solutions. For the vorticity $\omega$, we define the weighted Sobolev spaces  by
	\begin{equation}\nonumber
		\begin{aligned}
			X_{\bar{\alpha}} =&\ \bigg\{f\big|\ \norm{f}_{X_{\bar{\alpha}}} :=  \norm{f}_{L^2(\Omega_M)} + \norm{|\p_x|^{\frac{1}{18}}y^{\frac{1}{6}}f}_{L^2(\Omega_M)} + \norm{f}_{Y_{\bar{\alpha}}}<\infty\bigg\},\\
			Y_{\bar{\alpha}} =&\ \bigg\{f\big|\ \norm{f}_{Y_{\bar{\alpha}}}:= \norm{y|\p_x|^{\frac{\bar{\alpha}}{2}}\p_xf}_{L^2(\Omega_M)} + \norm{|\p_x|^{\frac{2}{3} + \frac{\bar{\alpha}}{2}}f}_{L^2(\Omega_M)}\\
			&\qquad\qquad\qquad\quad  + \norm{|\p_x|^{\frac{1}{3}+\frac{\bar{\alpha}}{2}}\p_yf}_{L^2(\Omega_M)} + \norm{|\p_x|^{\frac{\bar{\alpha}}{2}}\p_y^2f}_{L^2(\Omega_M)}\\
			&\qquad\qquad\qquad\quad  + \norm{|\p_x|^{\frac{1+\bar{\alpha}}{2}}y^{\frac{1}{2}}\p_yf}_{L^2(\Omega_M)}<\infty\bigg\},
		\end{aligned}
	\end{equation}
	where the parameter $\bar\alpha$ is nonnegative and \(|\partial_x|^s := \mathcal{F}^{-1}[|\xi|^s \mathcal{F}[\cdot]]\) denotes the Fourier multiplier of order $s$.
	For the displacement function $A$, we define
	\begin{equation}\nonumber
		\begin{aligned}
			X_{\alpha,\infty}=\bigg\{f\big|\ \norm{|\p_x|^{\frac{5}{6}}f}_{H^{\frac{4}{3}+\frac{\alpha}{2}}(\R)} + \norm{f}_{L^{\infty}(\R)}<\infty\bigg\}.
		\end{aligned}
	\end{equation}
	Then  the functional space for \((\omega, A)\) incorporating the upper boundary condition is defined by
	\begin{equation}\nonumber
		\begin{aligned}
			\cX_{\alpha,M} = \bigg\{(\omega,A)\big|&\ \norm{(\omega,A)}_{\cX_{\alpha,M}} = \norm{\omega}_{X_0\cap X_{\alpha}} + \norm{A}_{ X_{\alpha,\infty}}<\infty,\\
			&\ \ \omega(\cdot,y)\big|_{y=M}=\frac{Ai(M\p_x^{\frac{1}{3}})}{Ai'(0)}\p_x^{\frac{2}{3}}|\p_x|A\bigg\}.
		\end{aligned}
	\end{equation}
	Here $Ai(z)$ is the Airy function satisfying $\frac{d^2}{dz^2}Ai(z) - zAi(z)=0,\ Ai\big|_{z\uparrow+\infty} = 0$.

	To seek the solution to the system \eqref{Perturbation system0}, we introduce  the following crucial decomposition $(\omega,A) = (\omega_0+\bar{\omega},A_0+\bar{A})$,
	where $(\omega_0, A_0)$  and $(\bar{\omega}, \bar{A})$ solve the following system
	\begin{equation}\label{Eq for omega0}
		\left\{\begin{aligned}
			&y\p_x\omega_0 - \p_y^2\omega_0 = 0,\quad (x,y)\in\Omega_M,\\
			&\p_y\omega_0\big|_{y=0} = \p_x|\p_x|A_0,\quad I_M[\omega_0] = A_0 + F,\quad x\in\R,
		\end{aligned}\right.
	\end{equation}
	and
	\begin{equation}\label{Eq for omega1}
		\left\{\begin{aligned}
			&y\p_x\bar{\omega} - \p_y^2\bar{\omega} = -(u\p_x\omega + v\p_y\omega)\triangleq \bh,\quad (x,y)\in\Omega_M,\\
			&u = I_y[\omega],\quad v = -\p_xI_y[u],\quad (x,y)\in\Omega_M,\\
			&\p_y\bar{\omega}\big|_{y=0} = \p_x|\p_x|\bar{A},\quad I_M[\bar{\omega}] = \bar{A},\quad x\in\R.
		\end{aligned}\right.
	\end{equation}
	
	Our first main result is the solvability of the linear system \eqref{Eq for omega0}.
	
	\begin{thm}\label{Main thm0-0}
		Let $\alpha\in(0,\frac{7}{3}]$. The linear system \eqref{Eq for omega0}  admits a solution $(\omega_0, A_0)\in\cX_{\alpha,M}$ satisfying the estimate
		\begin{equation}
			\begin{aligned}
				\norm{\omega_0}_{X_0\cap X_{\alpha}} + \norm{A_0}_{X_{\alpha,\infty}}\le&\ C\norm{F}_{H^2(\R)},
			\end{aligned}
		\end{equation}
		where $C>0$ is independent of $\alpha,M$.
	\end{thm}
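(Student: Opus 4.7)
The plan is to solve the linear system \eqref{eq:A-0-w-0} explicitly in Fourier space and then translate the resulting frequency-localized identities into the norms defining $\cX_{\alpha,M}$.

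First I would apply the Fourier transform in $x$. Writing $\hat{\omega}_0(\xi,y)=\cF_x[\omega_0](\xi,y)$, the interior equation becomes
\begin{equation*}
    i\xi y\,\hat{\omega}_0-\p_y^2\hat{\omega}_0=0,
\end{equation*}
which, under the change of variable $z=y(i\xi)^{1/3}$ with a suitably chosen branch cut, is precisely the Airy equation $\hat{\omega}_{0,zz}=z\hat{\omega}_0$. Because the upper-boundary constraint in the definition of $\cX_{\alpha,M}$ already encodes the $Ai$-branch (it kills the exponentially growing $Bi$-component in the $\xi\to\infty$ regime), I would take
\begin{equation*}
    \hat{\omega}_0(\xi,y)=c(\xi)\,Ai\bigl(y(i\xi)^{1/3}\bigr),
\end{equation*}
and then the Neumann boundary condition at $y=0$ gives $c(\xi)=\frac{(i\xi)^{2/3}|\xi|}{Ai'(0)}\hat{A}_0(\xi)$. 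Note that the prescribed trace at $y=M$ is then automatically satisfied, which is one of the requirements of $\cX_{\alpha,M}$.

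Next, the integral constraint $I_M[\omega_0]=A_0+F$ becomes, after substituting $z=y(i\xi)^{1/3}$,
\begin{equation*}
    \Bigl[\,\frac{(i\xi)^{1/3}|\xi|}{Ai'(0)}\,\Phi\bigl(M(i\xi)^{1/3}\bigr)-1\Bigr]\hat{A}_0(\xi)=\hat{F}(\xi),\qquad \Phi(\zeta):=\int_0^\zeta Ai(z)\,dz.
\end{equation*}
This identifies $\hat{A}_0(\xi)=\hat{F}(\xi)/m(\xi)$ for the multiplier $m(\xi)$ in brackets, and determines everything. The core of the proof is then to show that $m$ is bounded away from zero uniformly in $\xi\in\R$ and in $M$ large, so that $1/m$ gives a bona fide Fourier multiplier with the symbol class implicit in the norms of $X_{\alpha,\infty}$ and $X_0\cap X_\alpha$.

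The main obstacle, foreshadowed in the abstract, is exactly the behaviour of $m(\xi)$ near $\xi=0$, where the Airy equation degenerates to $-\p_y^2\hat{\omega}_0=0$ and the formal expression $(i\xi)^{1/3}|\xi|\Phi(M(i\xi)^{1/3})$ becomes $0\cdot 0$ with cancelling asymptotics; a naive expansion only yields $m(0)=-1$ but fails to control the slope and higher derivatives of $1/m$ uniformly in $M$. To handle this I would: (i) expand $Ai$ and $\Phi$ using their connection with the modified Bessel functions $I_{\pm 1/3},K_{1/3}$ on the relevant rays, so that the non-vanishing of $m(\xi)$ on $\R\setminus\{0\}$ reduces to a statement about where certain combinations of Bessel functions can vanish, which is controlled by the distribution of zeros of these Bessel functions; (ii) construct the ``subtle function'' alluded to in the abstract to absorb the degenerate factor $(i\xi)^{1/3}|\xi|$ and produce a uniform-in-$M$ lower bound on $|m(\xi)|$ on a neighbourhood of $\xi=0$.

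Having secured the low-frequency control, I would close the argument by splitting the frequency domain into $|\xi|\le 1$ and $|\xi|\ge 1$. In the high-frequency regime, the asymptotics $Ai(\zeta)\sim \tfrac{1}{2\sqrt\pi}\zeta^{-1/4}e^{-\tfrac{2}{3}\zeta^{3/2}}$ in the sector of interest, together with $\Phi(\infty)=1/3$, give $|m(\xi)|\sim |\xi|^{4/3}$, and a modified elliptic multiplier (a weight roughly of the form $(1+|\xi|)^s$ balanced against the exponential decay of $Ai(y(i\xi)^{1/3})$ in $y$) converts this into the required gain of derivatives measured by the $Y_{\bar\alpha}$ seminorms. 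Combining the two regimes via Plancherel yields
\begin{equation*}
    \|\omega_0\|_{X_0\cap X_\alpha}+\|A_0\|_{X_{\alpha,\infty}}\lesssim \|\hat F/m\|_{\text{suitable}}\lesssim \|F\|_{H^2(\R)},
\end{equation*}
with a constant independent of $\alpha\in(0,\tfrac73]$ and $M$, finishing the proof. The hardest part is unambiguously the construction and analysis of the auxiliary function that tames $m(\xi)$ near $\xi=0$ uniformly in $M$; the high-frequency estimate is technically involved but structurally routine once the correct multiplier modification is identified.
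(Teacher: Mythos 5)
Your skeleton is the same as the paper's: take the Airy ansatz $\hat{\omega}_0(\xi,y)=\frac{Ai((i\xi)^{1/3}y)}{Ai'(0)(i\xi)^{1/3}}\,i\xi|\xi|\hat{A}_0(\xi)$ (which satisfies the Neumann condition at $y=0$ and automatically the trace condition in $\cX_{\alpha,M}$), plug it into $I_M[\omega_0]=A_0+F$ to get $\hat{A}_0=-\hat F/m(\xi)$ with the multiplier \eqref{m-multiplier}, prove $|m(\xi)|^{-1}\lesssim(1+|\xi|)^{-4/3}$, and then convert this into the $X_0\cap X_\alpha$ and $X_{\alpha,\infty}$ bounds via the estimates of Lemma \ref{lem for boundary data}. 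Your computation of the multiplier identity is correct (your bracket is $-m(\xi)$), and your high-frequency asymptotics $|m(\xi)|\sim|\xi|^{4/3}$ via $\int_0^\infty Ai=\tfrac13$ match \eqref{Multiplier m 2}. Note also that only the pointwise symbol bound is needed, since the transfer is an $L^2$-based Plancherel estimate; the uniform control of derivatives of $1/m$ you worry about plays no role.

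The genuine gap is in the key step, the lower bound on $|m|$, where your plan misidentifies both the location of the difficulty and the tool. (i) Near $\xi=0$ the bound is elementary and uniform in $M$: since $p_0=\sup_{\arg z=\pm\pi/6}\big|\int_0^zAi\big|<\infty$, one has $|m(\xi)-1|\le p_0|\xi|^{4/3}/|Ai'(0)|\le\tfrac34$ for $|\xi|\le a_0$, so $m\ge\tfrac14$ there; no Bessel-function input is required. The Bessel-zero argument and the ``subtle function'' $t(z)=\frac{Ai(z)}{Bi(z)}-\frac{Ai'(0)}{Bi'(0)}$ from the abstract belong to a different object entirely: they give the non-vanishing of the denominator $s_M^{-1}$ in the Green's function of the inhomogeneous Neumann--Dirichlet problem (Lemma \ref{lem for nonlinear terms}, Remark \ref{rem for sm}), which is needed for the nonlinear Theorem \ref{Main thm0}, not for bounding $m$; and non-vanishing of $1-\frac{p_M(\xi)}{Ai'(0)e^{\sgn\xi\,\pi i/3}}i\xi|\xi|^{1/3}$ is not reducible to the zero set of a single Bessel function, so the reduction you sketch does not go through as stated. (ii) The regime you never address is $|\xi|$ of order one, where $m$ could in principle vanish for moderate $M$; this is exactly where the hypothesis $M\ge M_0$ enters in the paper (uniform convergence $p_M\to\tfrac13$ on $|\xi|\ge a_0$, with Remark \ref{rmk: M} covering general $M$ only numerically). (iii) Finally, your closing chain skips the components of the norms that require specific work uniformly in $M$: the weighted low-frequency norm $\norm{|\p_x|^{1/18}y^{1/6}\omega_0}_{L^2}$ (obtained from $\norm{Ai((i\xi)^{1/3}y)(|\xi|^{1/3}y)^{\alpha'}}_{L_y^2}\lesssim|\xi|^{-1/6}$ as in \eqref{Estimate of omegab lower}) and the $L^\infty$ bound on $A_0$ in $X_{\alpha,\infty}$, which the paper gets from $\norm{A_0}_{L^\infty}\le\norm{\omega_0}_{L_y^1L_x^\infty}\lesssim\norm{\omega_0}_{X_0}$ via Proposition \ref{prop for estimate of f in X0} — this is precisely the reason the extra weighted norm is built into $X_0$.
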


	The second main result of this paper is the existence and uniqueness of classical solution to the nonlinear system \eqref{Perturbation system0}.
	
	\begin{thm}\label{Main thm0}
		Let $\alpha\in(\frac{2}{3},\frac{7}{3}]$ and $\e\in(0,\frac{1}{6}]$.  There exist a small constant $\delta_0=\delta_0(\alpha,\e)>0$ and a sufficiently large constant $M_0$, such that when  $M\ge M_0$ and  
		\begin{align*}
			\norm{F}_{H^2(\R)}\le \delta_0M^{-\frac{5+3\e}{2}},
		\end{align*}
		the system \eqref{Perturbation system0} admits a solution $(\omega,A) = (\omega_0+\bar{\omega},A_0+\bar{A})\in\cX_{\alpha,M}$ with       
		\begin{equation}\nonumber
			\begin{aligned}
				\norm{\bar{\omega}}_{X_0\cap X_{\alpha}} + \norm{\bar{A}}_{X_{\alpha,\infty}}\le&\ CM^{\frac{5+3\e}{2}}\norm{F}_{H^2(\R)}^2.
			\end{aligned}
		\end{equation}
		Here $C>0$ is independent of $\alpha,\e, M$. Moreover, $(\omega, A)$ is the unique solution to the system \eqref{Perturbation system0} within $\cX_{\alpha,M}$ if
		$$
		\norm{(\omega,A)}_{\cX_{\alpha,M}} + \norm{F}_{H^2(\R)}\le 2\delta_0M^{-\frac{5+3\e}{2}}.
		$$
	\end{thm}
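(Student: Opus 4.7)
The plan is to realize $(\bar\omega,\bar A)$ as a fixed point of a nonlinear map built on top of Theorem~\ref{Main thm0-0}. Substituting $(\omega,A)=(\omega_0+\bar\omega,A_0+\bar A)$ into \eqref{Perturbation system0} and using that $(\omega_0,A_0)$ solves \eqref{eq:A-0-w-0}, the remainder obeys
\begin{equation*}
\begin{aligned}
&y\p_x\bar\omega-\p_y^2\bar\omega=\cN[\omega_0+\bar\omega],\\
&\p_y\bar\omega\big|_{y=0}=\p_x|\p_x|\bar A,\qquad I_M[\bar\omega]=\bar A,
\end{aligned}
\end{equation*}
where $\cN[\omega]:=-I_y[\omega]\,\p_x\omega+\p_x I_y[I_y[\omega]]\,\p_y\omega$. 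Expanding, $\cN[\omega_0+\bar\omega]=\cN[\omega_0]+\bigl(\cB(\omega_0,\bar\omega)+\cB(\bar\omega,\omega_0)\bigr)+\cB(\bar\omega,\bar\omega)$, with $\cB$ the symmetric bilinear form associated to $\cN$. The upper boundary condition built into $\cX_{\alpha,M}$ is automatically preserved by subtracting $(\omega_0,A_0)$, since the linearized problem fixes this same relation.

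The first step is to extend Theorem~\ref{Main thm0-0} to the inhomogeneous problem with a right-hand side $g$ in a suitable predual space $\cY^*$ of $X_0\cap X_\alpha$, producing $(\bar\omega,\bar A)\in\cX_{\alpha,M}$ with
\begin{equation*}
\norm{\bar\omega}_{X_0\cap X_\alpha}+\norm{\bar A}_{X_{\alpha,\infty}}\le C\bigl(\norm{F}_{H^2(\R)}+\norm{g}_{\cY^*}\bigr).
\end{equation*}
The Fourier/Airy construction used for Theorem~\ref{Main thm0-0}, together with the modified elliptic multiplier and zero-frequency trick advertised in the abstract, applies verbatim: the particular solution generated by $g$ only modifies the inhomogeneous data at each frequency $\xi$ and inherits the same uniform-in-$\xi$ control.

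The second, and central, step is a bilinear estimate of the form
\begin{equation*}
\norm{\cB(\omega_1,\omega_2)}_{\cY^*}\le C\,M^{\frac{5+3\e}{2}}\norm{\omega_1}_{X_0\cap X_\alpha}\norm{\omega_2}_{X_0\cap X_\alpha}.
\end{equation*}
Each convective term contains an unweighted $\p_x\omega$ or $\p_y\omega$, while the $Y_\alpha$-norm only controls $|\p_x|^{2/3+\alpha/2}\omega$, $|\p_x|^{1/3+\alpha/2}\p_y\omega$, and $|\p_x|^{\alpha/2}\p_y^2\omega$. The trade between a full $\p_x$ and the fractional multiplier $|\p_x|^{2/3+\alpha/2}$ is the origin of the hypothesis $\alpha>\tfrac{2}{3}$; the slack $\e\in(0,\tfrac16]$ absorbs the strict inequality in the interpolation. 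The factor $M^{(5+3\e)/2}$ enters whenever the integration $I_y[\,\cdot\,]$ over $[0,M]$ is bounded via Cauchy--Schwarz or the weights $y^{1/6},\,y^{1/2}$ are converted to $L^\infty_y$-bounds; the dual weight $y$ in $\norm{y|\p_x|^{\alpha/2}\p_x\omega}_{L^2}$ is especially useful for the $\p_x I_y[I_y[\omega]]\p_y\omega$ term. Once this estimate is secured, plugging it into Step~1 and setting $\Phi(\bar\omega,\bar A):=$ the linear solution with forcing $\cN[\omega_0+\bar\omega]$ gives
\begin{equation*}
\norm{\Phi(\bar\omega,\bar A)}_{\cX_{\alpha,M}}\le CM^{\frac{5+3\e}{2}}\bigl(\norm{F}_{H^2(\R)}+\norm{(\bar\omega,\bar A)}_{\cX_{\alpha,M}}\bigr)^2.
\end{equation*}
Choosing $R:=2CM^{(5+3\e)/2}\norm{F}_{H^2}^2$ and $\delta_0$ small, the smallness assumption $\norm{F}_{H^2}\le\delta_0 M^{-(5+3\e)/2}$ guarantees both $\Phi$ maps the closed ball of radius $R$ in $\cX_{\alpha,M}$ to itself and that $\Phi$ is a contraction; the Banach fixed-point theorem then produces a unique $(\bar\omega,\bar A)$ in this ball. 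Uniqueness in the full space $\cX_{\alpha,M}$ follows by applying the same bilinear bound to the difference of any two candidate solutions and using once more that $M^{(5+3\e)/2}\norm{F}_{H^2}$ is small.

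The principal obstacle is, unsurprisingly, Step~2. The convective terms mix a $y$-integration against a full derivative, yet the natural $X_0\cap X_\alpha$ control carries only fractional $|\p_x|$-smoothing coupled to delicate $y$-weights; matching these to the predual norm $\cY^*$ that the linearized theory can absorb, while losing no more than $M^{(5+3\e)/2}$ from the strip $[0,M]$, is what forces both the lower bound $\alpha>2/3$ and the $\e$-slack in the hypothesis, and is the technical heart of the argument.
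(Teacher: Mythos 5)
Your overall architecture coincides with the paper's: decompose $(\omega,A)=(\omega_0,A_0)+(\bar\omega,\bar A)$, solve the remainder system by combining an inhomogeneous linear theory for $y\p_x-\p_y^2$ with the boundary/integral conditions reattached through the multiplier $m(D)^{-1}$, estimate the quadratic forcing with an $M^{\frac{5+3\e}{2}}$ loss, and close by a contraction argument under $\norm{F}_{H^2}\le\delta_0M^{-\frac{5+3\e}{2}}$. The paper runs an explicit Picard iteration (Proposition \ref{prop interation} plus the elementary recursion Lemma \ref{lem for iteration}) rather than quoting the Banach fixed-point theorem, and it proves convergence by summing the differences $\rw_k$, $\rA_k$; that difference from your write-up is cosmetic.

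However, as written your proposal has genuine gaps exactly at the two steps you defer. First, the forcing space is not a ``predual $\cY^*$ of $X_0\cap X_\alpha$'' and the claim that the construction of Theorem \ref{Main thm0-0} ``applies verbatim'' is not justified: the paper's inhomogeneous estimate (Lemma \ref{lem for nonlinear terms}) takes $h\in L^2\cap\dot H_x^{\frac\alpha2}L_y^2$ and costs $(1+M^2)$ on the low-frequency part, and establishing it is where the real work sits --- the Green's function carries the factor $(i\xi)^{-\frac13}$, and its uniform boundedness hinges on the non-vanishing of $t(z)=\frac{Ai(z)}{Bi(z)}-\frac{Ai'(0)}{Bi'(0)}$ on the relevant sector, proved via the Bessel-function zero distribution (Remark \ref{rem for sm}); this is the paper's key contribution and cannot be waved through. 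Second, the bilinear estimate is asserted rather than proved, and its correct form is not purely in terms of $\norm{\omega_i}_{X_0\cap X_\alpha}$: the paper must rewrite $v=V^0-y\p_xA_M$ with $A_M=I_M[\omega]$, so the product estimates (Proposition \ref{prop for forced terms}) also involve $\norm{\p_xA_M}_{L^\infty\cap\dot H^{\frac\alpha2}}$, and they rely on the weighted interpolation bounds of Propositions \ref{prop for estimate of f in X0}--\ref{prop for estimate of f in Xalpha} (this is precisely where $\alpha>\frac23$ and the $\e$-slack enter, and where the losses $M^{\frac{1+3\e}{2}}$ and $M$ arise, which combine with the linear $M^2$ to give $M^{\frac{5+3\e}{2}}$). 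Finally, in the uniqueness step you cannot simply ``apply the same bilinear bound to the difference'': for two arbitrary solutions in $\cX_{\alpha,M}$ the homogeneous Airy mode $C(\xi)\big(\sqrt3\,Ai((i\xi)^{\frac13}y)+Bi((i\xi)^{\frac13}y)\big)$ must be excluded, and the paper does this by using the $y=M$ trace relation built into $\cX_{\alpha,M}$ to force $C(\xi)=0$; moreover the resulting uniqueness is only obtained among solutions whose $\cX_{\alpha,M}$-norm is itself $O(\delta_0M^{-\frac{5+3\e}{2}})$, not unconditionally in the full space as your last paragraph suggests.
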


	\begin{rem}
		Our theoretical results align closely with the physical phenomena. The linear component $(\omega_0,A_0)$ captures the dominant behavior of \((\omega, A)\). Considering the scaling relation $M=\cO(\text{Re}^{\frac{\alpha_0}{8}})$ for $\alpha_0\in(0,\frac{3}{4})$ (see Appendix for details), our analysis shows that the allowable roughness scale \(\cO(\text{Re}^{-a_*})\) with \(a_*\in(0,\infty)\) falls within the physically reasonable range $[0,\infty)$ (see \cite{Smith1973}). 
	\end{rem}

	\begin{rem}
		If \(F = 0\), Iyer and Maekawa \cite{IM2024} derived the local rigidity of Couette flow. Specifically, they proved that an $L^2$ strong solution $(\omega,A)$ of \eqref{Perturbation system0} is trivial if $\norm{(\omega,A)}_{sc}$ is sufficiently small, where $\norm{\cdot}_{sc}$ is a scaling invariant norm. As a byproduct of Theorem \ref{Main thm0}, we further establish the local uniqueness of Couette flow: the system \eqref{Perturbation system0} has a locally unique solution in $\cX_{\alpha,M}$, which is a trivial solution.  
		
	\end{rem}

	\begin{rem}
		We adopt $M\gg1$ in our model for consistency with physical observations. Indeed, for general $M>0$, the results may also be got by Remark \ref{rmk: M}.

	\end{rem}


	\subsection{Main difficulties and  Key ideas}

	The system \eqref{Perturbation system0} is a degenerate nonlinear parabolic equation with variable coefficients and its degeneration occurs on the wall \(y = 0\). 
	To solve the system  \eqref{Perturbation system0}, we first introduce a crucial decomposition  \eqref{Eq for omega0}  and  \eqref{Eq for omega1}, 
	which separates the confluence caused by the roughness $F$ from the mathematical difficulties arising from nonlinear terms.
	
	For the homogeneous problem \eqref{Eq for omega0}, the key ingredient is to solve the following linear problem
	\begin{equation}\label{Perturbation system0-1}
		\left\{\begin{aligned}
			&y\p_xf_e - \p_y^2f_e = h,\quad (x,y)\in\Omega_M,\\
			&\p_yf_e\big|_{y=0} = 0,\quad x\in\R.
		\end{aligned}\right.
	\end{equation}
	Taking the Fourier transformation in $x$, the linear operator of the first equation in \eqref{Perturbation system0-1} is expressed as \(i\xi y - \partial_y^2\). It shares a structural similarity with the classical Airy operator \(\frac{d^2}{dz^2} - z\). Indeed,  there exists a solution $\hat{f}_e$ to the following linear problem
	$$\left\{\begin{aligned}
		&(i\xi y -\p_y^2)\hat{f} = \hat{h},\quad (\xi,y)\in\Omega_M,\\
		&\p_y\hat{f}\big|_{y=0} = 0,\quad \xi\in\R.
	\end{aligned}\right.$$
	Similar to the proof in \cite{IM2024}, the solution satisfies
	\begin{equation}\nonumber
		\begin{aligned}
			\norm{f_e}_{Y_0}\le C\norm{h}_{L^2}.
		\end{aligned}
	\end{equation}
	However, for the existence of  the solution, we need to establish the following $L^2$ boundedness
	\begin{equation}\nonumber
		\begin{aligned}
			\norm{f_e}_{L^2}\lesssim  \norm{h}_{L^2},
		\end{aligned}
	\end{equation}
	which is the key difference and difficulty compared with the work \cite{IM2024}. We observe that the homogeneous solution to the linear operator involves a factor \((i\xi)^{-\frac{1}{3}}\), which exhibits a singularity near \(\xi = 0\). 
	For any inhomogeneous term \(h\), the general solution includes a term of the form 
	$$(i\xi)^{-\frac{1}{3}} \int G(y,z;\xi)\hat{h}(\xi,z)\,dz,$$ 
	where the quantity \((i\xi)^{-\frac{1}{3}}G(y,z;\xi)\) represents a specific Green's function. Note that the nonlinear term \(-u\partial_x\omega - v\partial_y\omega\) cannot be entirely recast solely as $x$-directional derivatives (i.e., $\p_x(\text{Terms})$). This structural constraint hinders low-order estimates derived from the nonlinear terms. To address the singularity near $\xi=0$, {\bf a key observation} is that the function
	$$
	t(z) =  \frac{Ai(z)}{Bi(z)} - \frac{Ai'(0)}{Bi'(0)}
	$$
	has no zeros in the sector $\{z\in\C|\ \text{Arg}(z)\in(-\frac\pi3,\frac\pi3)\}$, where $Ai(z),Bi(z)$ denote two independent Airy functions. This property is non-trivial (see Remark \ref{rem for sm}), which involves the distribution of zeros of the Bessel function. Based on this property, we find a subtle function $G$ to create $(i\xi)^{\frac13}$, which takes the following form
	\begin{equation}\nonumber
		\begin{aligned}
			&G(y,z;\xi) = \frac{\pi}{t((i\xi)^{\frac13}M)} \left\{\begin{aligned}
				\mu_M(\xi,y)\mu_0(\xi,z),&\quad 0\le z<y,\\
				\mu_M(\xi,z)\mu_0(\xi,y),&\quad y< z\le M,
			\end{aligned}\right.\\
			&\mu_M(\xi,y) = Ai((i\xi)^{\frac{1}{3}}y) - \frac{Ai((i\xi)^{\frac{1}{3}}M)}{Bi((i\xi)^{\frac{1}{3}}M)}Bi((i\xi)^{\frac{1}{3}}y),\\
			&\mu_0(\xi,y) = Ai((i\xi)^{\frac{1}{3}}y) - \frac{Ai'(0)}{Bi'(0)}Bi((i\xi)^{\frac{1}{3}}y).
		\end{aligned}
	\end{equation}
	It is easy to verify that $\mu_M$ eliminates the singularity near $\xi=0$ and $G$ is bounded on $\big\{z\in\C|\ \text{Arg}(z)\in[-\frac\pi6,\frac\pi6]\big\}$. \smallskip


	The last key point is to show that the displacement $A$ is bounded for all $M\ge M_0$. We observe that $A = \int_0^{M}\omega(x,y)dy$ and then 
	$$
	\norm{A}_{L^{\infty}(\R)} \le \norm{\omega}_{L_x^{\infty}L_y^1}.
	$$
	Indeed, we can bound the $L_x^{\infty}L_y^1$ norm of $\omega$ by means of the norm $\norm{\omega}_{L^2\cap Y_{\bar{\alpha}}}$ for $\bar{\alpha}\ge0$ via interpolation: there exists some \(\theta_1 = \theta_1(\bar{\alpha}) \in (0,1)\),    
	$$
	\norm{\omega}_{L_x^{\infty}L_y^1}\le C(\bar{\alpha},M)\norm{\omega}_{L^2}^{\theta_1}\norm{\omega}_{Y_0}^{1-\theta_1} + \norm{\omega}_{L_x^{\infty}(\R; L_y^1([0,1])}\le C_0(\bar{\alpha},M)\norm{F}_{H^2(\R)}.
	$$
	Here $\bar{\alpha}\in[0,\frac73]$ and $M\ge M_0$.  We observe that
	\begin{equation}
		\inf_{\bar{\alpha}\in[0,\frac73]}C_0(\bar{\alpha},M) = C_0(0,M) = O([\log(M)]^{\frac12}),\quad \forall M\ge M_0.
	\end{equation}
	This means that the bound of $A$ cannot be controlled uniformly in $M$ by the norm $\norm{F}_{H^2(\R)}$—a contradiction to the physical intuition that that the linear component $(\omega_0,A_0)$ of the solution $(\omega,A)$ dictates the behavior of $(\omega,A)$. To eliminate the dependence of the constant $C_0$ on $M$, we introduce a {\bf new norm $\norm{|\p_x|^{\frac{1}{18}}y^{\frac16}\omega}_{L^2}$}, which allows us to derive a refined estimate. Specifically, we have
	$$
	\norm{\omega}_{L_x^{\infty}L_y^1}\le C\big(\norm{|\p_x|^{\frac{1}{18}}y^{\frac16}\omega}_{L^2} + \norm{\omega}_{Y_0}\big)\le C\norm{F}_{H^2(\R)}.
	$$
	Here $C>0$ is independent of $M$. The estimate for this new norm is established by leveraging additional properties of Airy functions (see Proposition \ref{prop for forced terms}).

	\subsection{Notations and Organization of the paper}
	
	We adopt standard mixed-norms on \(\Omega_M\) as follows
	\begin{equation}\nonumber
		\begin{aligned}
			&\langle f,g\rangle_{L^2(\Omega_M)} = \int_{\Omega_M}f\bar{g}\ d\fx,\quad \norm{f}_{L^2}^2 = \int_{\Omega_M}|f|^2d\fx,\\
			&\norm{f}_{L_x^pL_y^q} = \norm{f}_{L_x^p(\R; L_y^q([0,M]))},\quad \mbox{for }\ p,q\ge1,\\
			&\norm{f}_{H_x^{s}L_y^2} = \norm{f}_{H_x^s(\R; L_y^2([0,M]))},\quad
			\norm{f}_{L_x^{2}H_y^s} = \norm{f}_{L_x^2(\R; H_y^s([0,M]))},\quad \mbox{for }\ s\in\R.
		\end{aligned}
	\end{equation}
	
	For some inequalities, the notation  $B_0\lesssim B_1$ denotes that there exists some $C>0$ independent of $B_0$ and $B_1$ such that $B_0\le CB_1$.

	\medskip

	This paper is organized as follows. First, in Section 2, we characterize the functional spaces used in the analysis of the triple-deck equations and derive key nonlinear estimates, including weighted Sobolev norms for vorticity and displacement. Subsequently, Section 3 focuses on the solvability of the linear system of the triple-deck system.  Thereafter, in Section 4, we prove the main result (Theorem \ref{Main thm0}) via a Picard iteration scheme. Finally, we present an appendix, which provides a formal derivation of the complete triple-deck system, including asymptotic matching conditions between the viscous sublayer, main layer, and upper layer. 
	
	\section{Functional spaces and nonlinear estimates}
	
	To study the quantity $\omega$, we first consider the space $X_0\cap X_{\alpha}$. Note that the space $X_{\bar{\alpha}}$ consists of a low-order part and a high-order part. The low-order part is characterized by the norms $\norm{f}_{L^2(\Omega_M)}$ and $\norm{|\p_x|^{\frac{1}{18}}y^{\frac{1}{6}}f}_{L^2(\Omega_M)}$, while the high-order part is characterized by $\norm{f}_{Y_{\bar{\alpha}}}$. We now present some properties between these two parts for $f\in X_0$.
	
	\begin{lem}\label{prop for estimate of f in X0}
		It holds that  for any $0\le\e\le\frac{1}{6}$, 
		\begin{equation}\nonumber
			\begin{aligned}
				\norm{f}_{L_x^2L_y^{\infty}}+ \norm{|\p_x|^{\frac{1}{2}-\e}y^{\frac{1}{2}-3\e}\p_yf}_{L^2(\Omega_M)} + \norm{\p_yf}_{L_x^6L_y^{2}} + \norm{f}_{L_y^1L_x^{\infty}}\le C\norm{f}_{X_0}.
			\end{aligned}
		\end{equation}
		Here $C$ is independent of $\e$.
	\end{lem}
	\begin{proof}
		By Sobelev embedding theorem and the interpolation, we obtain 
		\begin{equation}\nonumber
			\begin{aligned}
				&\norm{f}_{L_x^2L_y^{\infty}}\le C\norm{f}_{L_x^2H_y^2}\le C\norm{f}_{X_0},\\
				&\norm{|\p_x|^{\frac{1}{2}-\e}y^{\frac{1}{2}-3\e}\p_y f}_{L^2(\Omega_M)}\le \norm{|\p_x|^{\frac{1}{3}}\p_y f}_{L^2(\Omega_M)}^{6\e}\norm{|\p_x|^{\frac{1}{2}}y^{\frac{1}{2}}\p_y f}_{L^2(\Omega_M)}^{1-6\e}\le \norm{f}_{Y_0}.
			\end{aligned}
		\end{equation}
		Using the inequality $\norm{|\p_x|^{-\frac{1}{3}}f}_{L_x^{6}}\le C\norm{f}_{L_x^2}$, we derive
		\begin{equation}\nonumber
			\begin{aligned}
				\norm{\p_yf}_{L_x^6L_y^2} \le  C\norm{|\p_x|^{\frac{1}{3}}\p_yf}_{L^2(\Omega_M)}.
			\end{aligned}
		\end{equation}
		Having established the estimates above,  we now turn to the $L_y^1L_x^{\infty}$-estimate of $f$. Through direct calculations, we obtain
		\begin{equation}\nonumber
			\begin{aligned}
				&\norm{f}_{L_x^{\infty}(\R)}\le C\min\bigg\{\norm{y^{\frac{1}{6}}|\p_x|^{\frac{1}{18}}f}_{L_x^2(\R)}^{\frac{9}{17}}\norm{y\p_xf}_{L_x^2}^{\frac{8}{17}}\cdot y^{-\frac{19}{34}},\\
				&\qquad\qquad\qquad\qquad\quad\ \norm{y^{\frac{1}{6}}|\p_x|^{\frac{1}{18}}f}_{L_x^2(\R)}^{\frac{3}{11}}\norm{|\p_x|^{\frac{2}{3}}f}_{L_x^2}^{\frac{8}{11}}\cdot y^{-\frac{1}{22}}\bigg\},\\
				&\norm{f}_{L_y^1L_x^{\infty}}\le C\norm{y^{\frac{1}{6}}|\p_x|^{\frac{1}{18}}f}_{L^2(\Omega_M)}^{\frac{3}{11}}\norm{|\p_x|^{\frac{2}{3}}f}_{L^2(\Omega_M)}^{\frac{8}{11}}\norm{y^{-\frac{1}{22}}}_{L_y^2([0,1])}\\
				&\qquad\qquad\qquad + C\norm{y^{\frac{1}{6}}|\p_x|^{\frac{1}{18}}f}_{L^2(\Omega_M)}^{\frac{9}{17}}\norm{y\p_xf}_{L^2(\Omega_M)}^{\frac{8}{17}}\norm{y^{-\frac{19}{34}}}_{L_y^2([1,\infty])}\\
				&\qquad\qquad\le C\norm{y^{\frac{1}{6}}|\p_x|^{\frac{1}{18}}f}_{L^2(\Omega_M)}^{\frac{3}{11}}\norm{|\p_x|^{\frac{2}{3}}f}_{L^2(\Omega_M)}^{\frac{8}{11}}\\
				&\qquad\qquad\qquad + C\norm{y^{\frac{1}{6}}|\p_x|^{\frac{1}{18}}f}_{L^2(\Omega_M)}^{\frac{9}{17}}\norm{y\p_xf}_{L^2(\Omega_M)}^{\frac{8}{17}}\\
				&\qquad\qquad\le C\norm{f}_{X_0}.
			\end{aligned}
		\end{equation}
	\end{proof}
	
	
	\begin{lem}\label{prop for estimate of f in Xalpha}
		Assume that $f\in X_0\cap X_\alpha$ for $\alpha>0$. Then for any $\e\in(0,\min\{\frac{1}{6},\frac{\alpha}{2}\}]$, we have
		\begin{equation}\nonumber 
			\begin{aligned}
				\norm{y^{\frac{1-3\e}{2}}\p_yf}_{L_y^2L_x^{\infty}} + \norm{f}_{L^{\infty}(\Omega_M)}\le C\Big(\frac{(1+\alpha)^{\frac{1}{2}}}{\alpha}+\frac{1}{\sqrt{\e}}\Big)\big(\norm{f}_{X_0} + \norm{f}_{X_{\alpha}}\big).
			\end{aligned}
		\end{equation}
		Here $C>0$ is independent of $\e,\alpha$.
	\end{lem}
	
	\begin{proof}
		By the interpolation and Lemma \ref{prop for estimate of f in X0}, we have
		\begin{equation}\nonumber
			\begin{aligned}
				&\norm{\p_yf}_{L_x^{\infty}(\R)}\le \frac{C}{\sqrt{\e}}\norm{|\p_x|^{\frac{1}{2}-\e}\p_yf}_{L_x^2(\R)}^{\frac{1}{2}}\norm{|\p_x|^{\frac{1}{2}+ \e}\p_yf}_{L_x^2(\R)}^{\frac{1}{2}},\\ 
				&\norm{y^{\frac{1-3\e}{2}}\p_yf}_{L_y^2L_x^{\infty}}\le \frac{C}{\sqrt{\e}}\norm{|\p_x|^{\frac{1}{2}+\e}y^{\frac{1}{2}}\p_yf}_{L_y^2L_x^2}^{\frac{1}{2}} \norm{|\p_x|^{\frac{1}{2}-\e}y^{\frac{1}{2}-3\e}\p_yf}_{L_y^2L_x^2}^{\frac{1}{2}}\\
				&\qquad\qquad\qquad\quad\ \le \frac{C}{\sqrt{\e}}\Big(\norm{|\p_x|^{\frac{1}{2}}y^{\frac{1}{2}}\p_yf}_{L^2(\Omega_M)}^{\frac{1}{2}} + \norm{|\p_x|^{\frac{1+\alpha}{2}}y^{\frac{1}{2}}\p_yf}_{L^2(\Omega_M)}^{\frac{1}{2}}\Big) \norm{f}_{Y_0}^{\frac{1}{2}}\\
				&\qquad\qquad\qquad\quad\ \le \frac{C}{\sqrt{\e}}\big(\norm{f}_{Y_0} + \norm{f}_{Y_{\alpha}}\big).
			\end{aligned}
		\end{equation}
		We denote by $\tilde{f}$ the extension of $f$ to $\R^2$ such that $\tilde{f}\in H_y^2(\R;L_x^2(\R))\cap H_x^{\frac{2}{3}+\frac{\alpha}{2}}(\R;L_y^2(\R))$ for $\alpha>0$ and
		\begin{equation}\nonumber
			\begin{aligned}
				&\norm{\tilde{f}}_{H_y^2(\R;L_x^2(\R))}\le C\norm{f}_{H_y^2((0,M);L_x^2(\R))},\\
				&\norm{\tilde{f}}_{H_x^{\frac{2}{3}+\frac{\alpha}{2}}(\R;L_y^2(\R))}\le C\norm{f}_{H_x^{\frac{2}{3}+\frac{\alpha}{2}}(\R;L_y^2(0,M))}.
			\end{aligned}
		\end{equation}
		Direct calculations show that
		\begin{equation}\nonumber
			\begin{aligned}
				\norm{\tilde{f}}_{L_{x,y}^{\infty}}\le&\ \norm{\hat{\tilde{f}}}_{L_{\xi,\eta}^1}\\
				=&\ \int_{\R^2}\bigg\{\big|\hat{\tilde{f}}(\xi,\eta)(1+|\eta|)^2\big|^{\theta}\big|\hat{\tilde{f}}(\xi,\eta)(1+|\xi|)^{\frac{2}{3}+\frac{\alpha}{2}}\big|^{1-\theta}\\
				&\qquad\quad\cdot (1+|\eta|)^{-2\theta}(1+|\xi|)^{-(\frac{2}{3}+\frac{\alpha}{2})(1-\theta)}\bigg\}d\xi d\eta\\
				\le&\ C(\alpha,\theta)\norm{\big|\hat{\tilde{f}}(\xi,\eta)(1+|\eta|)^2\big|^{\theta}}_{L_{\xi,\eta}^{\frac{2}{\theta}}} \norm{\big|\hat{\tilde{f}}(\xi,\eta)(1+|\xi|)^{\frac{2}{3}+\frac{\alpha}{2}}\big|^{1-\theta}}_{L_{\xi,\eta}^{\frac{2}{1-\theta}}}\\
				\le&\ C(\alpha,\theta)\norm{\tilde{f}}_{H_y^2(\R;L_x^2(\R))}^{\theta} \norm{\tilde{f}}_{H_x^{\frac{2}{3}+\frac{\alpha}{2}}(\R;L_y^2(\R))}^{1-\theta}\\
				\le&\ C\frac{(1+\alpha)^{\frac{1}{2}}}{\alpha}\big(\norm{\tilde{f}}_{H_y^2(\R;L_x^2(\R))} + \norm{\tilde{f}}_{H_x^{\frac{2}{3}+\frac{\alpha}{2}}(\R;L_y^2(\R))}\big).
			\end{aligned}
		\end{equation}
		Here $C(\alpha,\theta) = \norm{(1+|\eta|)^{-2\theta}(1+|\xi|)^{-(\frac{2}{3} + \frac{\alpha}{2})(1-\theta)}}_{L_{\xi,\eta}^2}$ and $\theta=\frac{1}{2}(\frac{1}{4}+\frac{1+3\alpha}{4+3\alpha})$.
		This shows that
		\begin{equation}\nonumber
			\begin{aligned}
				\norm{f}_{L_{x,y}^{\infty}}\le&\ \norm{\tilde{f}}_{L_{x,y}^{\infty}}\le C\frac{(1+\alpha)^{\frac{1}{2}}}{\alpha}\big(\norm{\tilde{f}}_{H_y^2(\R;L_x^2(\R))} + \norm{\tilde{f}}_{H_x^{\frac{2}{3}+\frac{\alpha}{2}}(\R;L_y^2(\R))}\big)\\
				\le&\ C\frac{(1+\alpha)^{\frac{1}{2}}}{\alpha}\big(\norm{f}_{H_y^2((0,M);L_x^2(\R))} + \norm{f}_{H_x^{\frac{2}{3}+\frac{\alpha}{2}}\big(\R;L_y^2((0,M))\big)}\big)\\
				\le&\ C\frac{(1+\alpha)^{\frac{1}{2}}}{\alpha}\big(\norm{f}_{X_0} + \norm{f}_{X_{\alpha}}\big).
			\end{aligned}
		\end{equation}
	\end{proof}

	Next we are devoted to the estimates of nonlinear terms. Let the velocity $(u,v)$ satisfy the divergence-free condition
	\begin{align*}
		\p_xu + \p_yv = 0
	\end{align*}
	subject to the boundary conditions
	\begin{align*}
		(u,v)\big|_{y=0} = 0,\quad u\big|_{y=M} =A_M.
	\end{align*}
	where $A_M$ denotes a fixed velocity at the upper boundary $y=M$. Define the function 
	$$
	h = -(u\p_x\omega + v\p_y\omega),
	$$
	for $\omega = \p_yu$. Using the divergence-free condition and boundary conditions, we can rewrite $h$ as
	\begin{equation}\nonumber
		\begin{aligned}
			h = -u\p_x\omega - (V^0 - y\p_xA_M)\p_y\omega.
		\end{aligned}
	\end{equation}
	Here 
	\begin{align}\label{de:V0}
		V^0=\int_0^y\int_z^{M}\p_x\omega(\cdot,\tilde{z})d\tilde{z}dz.
	\end{align}

	\begin{prop}\label{prop for forced terms}
		Let $\alpha>\frac{2}{3},\ \e\in(0,\frac{1}{6}]$. Assume that $\omega\in X_0\cap X_{\alpha}$ and $ \p_xA_M\in L^{\infty}\cap H^{\frac{\alpha}{2}}$. Then $h\in H_x^{\frac{\alpha}{2}}L_y^2$ with
		\begin{equation}\label{2.12}
			\begin{aligned}
				&\norm{h}_{L^2(\Omega_M)}\le CM^{\frac{1+3\e}{2}}\big(\norm{\omega}_{X_0\cap X_{\alpha}} + \norm{\p_xA_M}_{L^2(\R)}\big)\norm{\omega}_{X_0\cap X_{\alpha}},\\
				&\norm{|\p_x|^{\frac{\alpha}{2}}h}_{L^2(\Omega_M)}\le CM\big(\norm{\omega}_{X_0\cap X_{\alpha}} + \norm{\p_xA_M}_{L^{\infty}(\R)\cap \dot{H}^{\frac{\alpha}{2}}(\R)}\big)\norm{\omega}_{X_0\cap X_{\alpha}}.
			\end{aligned}
		\end{equation}
		Here $C=C(\alpha,\e)>0$ is independent of $M$ and $A_M$.
	\end{prop}
	
	\begin{proof}
		By virtue of Lemma \ref{prop for estimate of f in X0} and Lemma \ref{prop for estimate of f in Xalpha}, we obtain
		\begin{equation*}
			\begin{aligned}
				\norm{h}_{L^2(\Omega_M)}\le&\ C\norm{\frac{u}{y}}_{L^{\infty}(\Omega_M)}\norm{y\p_x\omega}_{L^2(\Omega_M)} + CM^{\frac{3\e}{2}}\norm{\tilde{z}\p_x\omega}_{L^2(\Omega_M)}\norm{y^{\frac{1-3\e}{2}}\p_y\omega}_{L_y^2L_x^{\infty}}\\
				&\ \ + CM^{\frac{1+3\e}{2}}\norm{\p_xA_M}_{L^2(\R)}\norm{y^{\frac{1-3\e}{2}}\p_y\omega}_{L_y^2L_x^{\infty}}\\
				\le&\ C\big(\norm{\omega}_{L^{\infty}(\Omega_M)} + M^{\frac{3\e}{2}}\norm{y^{\frac{1-3\e}{2}}\p_y\omega}_{L_y^2L_x^{\infty}}\big)\norm{y\p_x\omega}_{L^2(\Omega_M)}\\
				&\ \ + CM^{\frac{1+3\e}{2}}\norm{\p_xA_M}_{L^{2}(\R)}\norm{y^{\frac{1-3\e}{2}}\p_y\omega}_{L_y^2L_x^{\infty}}\\
				\le&\ C(1+M^{\frac{1+3\e}{2}})\norm{\omega}_{X_0\cap X_{\alpha}}^2 + CM^{\frac{1+3\e}{2}}\norm{\p_xA_M}_{L^2(\R)}\norm{\omega}_{X_0\cap X_{\alpha}},
			\end{aligned}
		\end{equation*}
		which gives the first estimate of  \eqref{2.12}.

		For the higher-order estimates, by product estimates, we have
		\begin{equation}\label{higher estimates0}
			\begin{aligned}
				&\norm{|\p_x|^{\frac{\alpha}{2}}(u\p_x\omega)}_{L^2(\Omega_M)}\\
				&\le C\norm{\omega}_{L^{\infty}(\Omega_M)}\norm{|\p_x|^{\frac{\alpha}{2}}y\p_x\omega}_{L^2(\Omega_M)} + C\norm{|\p_x|^{\frac{\alpha}{2}}\omega}_{H_x^{\frac{2}{3}}L_y^2}\norm{y|\p_x|^{\frac{1}{3}}\omega}_{L^{\infty}(\Omega_M)}\\
				&\le C\norm{\omega}_{X_0\cap X_{\alpha}}^2 + C\norm{|\p_x|^{\frac{\alpha}{2}}\omega}_{H_x^{\frac{2}{3}}L_y^2}\norm{y|\p_x|^{\frac{1}{3}}\omega}_{L^{\infty}(\Omega_M)},
			\end{aligned}
		\end{equation}
		and  
		\begin{equation}\label{higher estimates1}
			\begin{aligned} 
				&\norm{|\p_x|^{\frac{\alpha}{2}}(V^0 \p_y\omega)}_{L^2(\Omega_M)}\\
				&\le C\norm{y^{-\frac{1}{2}}\int_0^y\int_z^{M}|\p_x|^{\frac{\alpha}{2}}\p_x\omega(\cdot,\tilde{z})d\tilde{z}dz}_{L_x^2L_y^{\infty}}\norm{y^{\frac{1}{2}}\p_y\omega}_{L_x^{\infty}L_y^2}\\
				&\qquad+ C\norm{y^{-\frac{1}{2}}\int_0^y\int_z^{M}\p_x\omega(\cdot,\tilde{z})d\tilde{z}dz}_{L_x^4L_y^{\infty}}\norm{|\p_x|^{\frac{\alpha}{2}}y^{\frac{1}{2}}\p_y\omega}_{L_x^{4}L_y^2}\\
				&\le CM^{\frac{3\e}{2}}\norm{|\p_x|^{\frac{\alpha}{2}}\tilde{z}\p_x\omega}_{L^2(\Omega_M)}\norm{y^{\frac{1-3\e}{2}}\p_y\omega}_{L_y^2L_x^{\infty}}\\
				&\qquad+C\norm{\tilde{z}\p_x\omega}_{L_x^4L_{\tilde{z}}^2}\norm{|\p_x|^{\frac{\alpha}{2}}y^{\frac{1}{2}}\p_y\omega}_{L_x^{4}L_y^2}\\
				&\le CM^{\frac{3\e}{2}}\norm{\omega}_{X_0\cap X_{\alpha}}^2 + C\norm{\tilde{z}\p_x\omega}_{L_x^4L_{\tilde{z}}^2}\norm{|\p_x|^{\frac{\alpha}{2}}y^{\frac{1}{2}}\p_y\omega}_{L_x^{4}L_y^2} .
			\end{aligned}
		\end{equation}
		
		It remains to estimate the right-hand side of the above inequality.  By the Sobolev embedding, we have
		\begin{equation}\label{higher omega estimates0}
			\begin{aligned}
				&\norm{|\p_x|^{\frac{\alpha}{2}}\omega}_{H_x^{\frac{2}{3}}L_y^2}\le C\norm{|\p_x|^{\frac{\alpha}{2}}\omega}_{L_x^2L_y^2} + C\norm{|\p_x|^{\frac{2}{3}+\frac{\alpha}{2}}\omega}_{L_x^2L_y^2}\\
				&\qquad\le C\norm{\omega}_{L_x^2L_y^2} + \norm{|\p_x|^{\frac{2}{3}+\frac{\alpha}{3}}\omega}_{L_x^2L_y^2}\le C\norm{\omega}_{X_{\alpha}},\\
				&\norm{y|\p_x|^{\frac{1}{3}}\omega}_{L^{\infty}(\Omega_M)}\le\ M\norm{|\p_x|^{\frac{1}{3}}\omega}_{L^{\infty}(\Omega)}\\
				&\qquad\le C\frac{[1+(\alpha-\frac{2}{3})]^{\frac{1}{2}}}{\alpha-\frac{2}{3}}M\norm{|\p_x|^{\frac{1}{3}}\omega}_{H_x^{\frac{2}{3}+(\frac{\alpha}{2}-\frac{1}{3})}L_y^2\cap L_x^2H_y^2}\quad \mbox{for}\ \ \alpha>\frac{2}{3}\\
				&\qquad\le CM\big(\norm{|\p_x|^{\frac{1}{3}}\omega}_{L_x^2L_y^2} + \norm{|\p_x|^{\frac{2}{3}+\frac{\alpha}{2}}\omega}_{L_x^2L_y^2} + \norm{|\p_x|^{\frac{1}{3}}\p_{y}^2\omega}_{L_x^2L_y^2}\big)\\
				&\qquad\le CM\big(\norm{\omega}_{L_x^2L_y^2} + \norm{|\p_x|^{\frac{2}{3}+\frac{\alpha}{2}}\omega}_{L_x^2L_y^2} + \norm{\p_{y}^2\omega}_{L_x^2L_y^2} + \norm{|\p_x|^{\frac{\alpha}{2}}\p_{y}^2\omega}_{L_x^2L_y^2}\big)\\
				&\qquad\le CM\norm{\omega}_{X_0\cap X_{\alpha}},\\
			\end{aligned}
		\end{equation}
		and
		\begin{equation}
			\begin{aligned}
				\norm{y\p_x\omega}_{L_x^4L_y^2}&\le C\norm{|\p_x|^{\frac{1}{4}}y\p_x\omega}_{L_x^2L_y^2}\\
				&\le C\big(\norm{y\p_x\omega}_{L_x^2L_y^2} + \norm{|\p_x|^{\frac{\alpha}{2}}y\p_x\omega}_{L_x^2L_y^2}\big)\quad \mbox{for}\ \ \alpha\ge\frac{1}{2}\\
				&\le C\norm{\omega}_{X_0\cap X_{\alpha}},\\
				\norm{|\p_x|^{\frac{\alpha}{2}}y^{\frac{1}{2}}\p_y\omega}_{L_x^4L_y^2}&\le \norm{|\p_x|^{\frac{\alpha}{2}+\frac{1}{4}}y^{\frac{1}{2}}\p_y\omega}_{L_x^2L_y^2}\\
				&\le C\big(\norm{|\p_x|^{\frac{1}{2}}y^{\frac{1}{2}}\p_y\omega}_{L_x^2L_y^2} + \norm{|\p_x|^{\frac{\alpha}{2}+\frac{1}{2}}y^{\frac{1}{2}}\p_y\omega}_{L_x^2L_y^2}\big)\\
				&\le C\norm{\omega}_{X_0\cap X_{\alpha}}.
			\end{aligned}
		\end{equation}
		Combining all estimates above, we deduce that for $\alpha>\frac{2}{3}$, 
		\begin{equation}
			\begin{aligned}
				&\norm{|\p_x|^{\frac{\alpha}{2}}(u\p_x\omega)}_{L^2(\Omega_M)}\le\ CM\norm{\omega}_{X_0\cap X_{\alpha}}^2,\\
				&\norm{|\p_x|^{\frac{\alpha}{2}}(V^0\p_y\omega)}_{L^2(\Omega_M)}\le\ CM^{\frac{3\e}{2}}\norm{\omega}_{X_0\cap X_{\alpha}}^2.
			\end{aligned}
		\end{equation}
		Moreover, we have
		\begin{equation}\label{higher pxA estimates0}
			\begin{aligned}
				&\norm{|\p_x|^{\frac{\alpha}{2}}(y\p_xA_M\p_y\omega)}_{L^2(\Omega_M)}\\
				& \le C\norm{|\p_x|^{\frac{\alpha}{2}}\p_xA_M}_{L^2(\R)}\norm{y\p_y\omega}_{L_x^{\infty}L_y^2} + C\norm{\p_xA_M}_{L^{\infty}(\R)}\norm{|\p_x|^{\frac{\alpha}{2}}y\p_y\omega}_{L_x^2L_y^2}\\
				&\le CM^{\frac{1+3\e}{2}}\norm{\omega}_{X_0\cap X_{\alpha}}\norm{\p_xA_M}_{\dot{H}^{\frac{\alpha}{2}}} + CM\norm{\omega}_{X_0\cap X_{\alpha}}\norm{\p_xA_M}_{L^{\infty}}\\
				&\le CM\norm{\omega}_{X_0\cap X_{\alpha}}\norm{\p_xA_M}_{L^{\infty}\cap\dot{H}^{\frac{\alpha}{2}}}.
			\end{aligned}
		\end{equation}
		
		Substituting \eqref{higher omega estimates0}-\eqref{higher pxA estimates0} into \eqref{higher estimates0}-\eqref{higher estimates1}, we conclude the last estimate of \eqref{2.12}.
	\end{proof}

	\begin{rem}\label{rem for forced terms}
		Indeed, if $h$ is replaced by $h_1=-(u\p_x\omega_1 + v\p_y\omega_1)$ for $\omega_1\in X_0\cap X_{\alpha}$, then by repeating the proof of Proposition \ref{prop for forced terms}, we have
		\begin{equation}\nonumber
			\begin{aligned}
				&\norm{h_1}_{L^2(\Omega_M)}\le CM^{\frac{1+3\e}{2}}\big(\norm{\omega}_{X_0\cap X_{\alpha}} + \norm{\p_xA_M}_{L^2(\R)}\big)\norm{\omega_1}_{X_0\cap X_{\alpha}},\ \ \e\in(0,\frac{1}{6}],\\
				&\norm{|\p_x|^{\frac{\alpha}{2}}h_1}_{L^2(\Omega_M)}\le CM\big(\norm{\omega}_{X_0\cap X_{\alpha}} + \norm{\p_xA_M}_{L^{\infty}(\R)\cap \dot{H}^{\frac{\alpha}{2}}(\R)}\big)\norm{\omega_1}_{X_0\cap X_{\alpha}}.
			\end{aligned}
		\end{equation}
	\end{rem}

	The following lemma is devoted to the integral boundary condition $I_M[\omega] = A(x) + F(x)$.
	\begin{lem}\label{prop for IMf}
		It holds that for any $\bar{\alpha}\ge0$, 
		\begin{equation}
			\begin{aligned}
				\norm{|\p_x|^{\frac{5}{6} + \frac{\bar{\alpha}}{2}}I_{M}[f]}_{L^2(\R)}\le&\ C\norm{f}_{Y_{\bar{\alpha}}}.
			\end{aligned}
		\end{equation}
		Here $C$ is independent of $\bar{\alpha}$.
	\end{lem}
	
	\begin{proof}
		Observe that
		\begin{equation*}
			\begin{aligned}
				\left|\int_0^{M}\hat{f}(\xi,y)dy\right| =&\left|\int_0^{|\xi|^{-\frac{1}{3}}}\hat{f}(\xi,y)dy + \int_{|\xi|^{-\frac{1}{3}}}^{M}y^{-1}y\hat{f}(\xi,y)dy\right|\\
				\le&\ C|\xi|^{-\frac{1}{6}}\norm{\hat{f}(\xi,\cdot)}_{L_y^2} + C|\xi|^{\frac{1}{6}}\norm{y\hat{f}(\xi,\cdot)}_{L_y^2},
			\end{aligned}
		\end{equation*}
		which implies 
		\begin{equation}\nonumber
			\begin{aligned}
				\big|\xi^{\frac{5}{6}+\frac{\bar{\alpha}}{2}} I_{M}[f]\big|\le&\ C|\xi|^{\frac{2}{3} + \frac{\bar{\alpha}}{2}}\cdot\norm{\hat{f}(\xi,\cdot)}_{L_y^2(\R)} + C|\xi|^{1+\frac{\bar{\alpha}}{2}}\norm{y\hat{f}(\xi,\cdot)}_{L_y^2}).
			\end{aligned}
		\end{equation}
		Thus, we obtain
		\begin{equation}\nonumber
			\begin{aligned}
				\norm{|\p_x|^{\frac{5}{6} + \frac{\bar{\alpha}}{2}}I_{M}[f]}_{L^2(\R)}\le C\big(\norm{|\p_x|^{\frac{2}{3} +\frac{\bar{\alpha}}{2}}f}_{L^2} + \norm{y|\p_x|^{\frac{\bar{\alpha}}{2}}\p_xf}_{L^2}\big)\le C\norm{f}_{Y_{\bar{\alpha}}}.
			\end{aligned}
		\end{equation}
		It completes the proof.
	\end{proof}

	\section{The solvability of the linear system}
	
	Recall that, to solve the nonlinear system \eqref{Perturbation system0}, we decompose $\omega = \omega_0 + \bar{\omega}$ and $A = A_0 + \bar{A}$ with 
	\begin{equation}\label{Eq for omega0-1}
		\left\{\begin{aligned}
			&y\p_x\omega_0 - \p_y^2\omega_0 = 0,\quad (x,y)\in\Omega_M,\\
			&\p_y\omega_0\big|_{y=0} = \p_x|\p_x|A_0,\quad I_M[\omega_0] = A_0 + F,\quad x\in\R,
		\end{aligned}\right.
	\end{equation}
	and
	\begin{equation}\label{Eq for omega1-1}
		\left\{\begin{aligned}
			&y\p_x\bar{\omega} - \p_y^2\bar{\omega} = -(u\p_x\omega + v\p_y\omega)\triangleq \bh,\quad (x,y)\in\Omega_M,\\
			&u = I_y[\omega],\quad v = -\p_xI_y[u],\quad (x,y)\in\Omega_M,\\
			&\p_y\bar{\omega}\big|_{y=0} = \p_x|\p_x|\bar{A},\quad I_M[\bar{\omega}] = \bar{A},\quad x\in\R.
		\end{aligned}\right.
	\end{equation}
	
	To address the system \eqref{Eq for omega1-1}, we first solve an inhomogeneous linear problem without the integral condition
	\begin{equation}\label{Linearized PDE0}
		\left\{\begin{aligned}
			&y\p_xf_e - \p_y^2f_e = h_e,\quad (x,y)\in \Omega_M,\\
			&\p_yf_e\big|_{y=0} = 0,\quad f_e\big|_{y=M}=0,\quad x\in\R.
		\end{aligned}\right.
	\end{equation}
	
	\begin{prop}\label{lem for nonlinear terms}
		Assume that $h_e\in H_x^{\frac{\alpha}{2}}L_y^2$ for some $\alpha>0$. Then there exists a unique solution $f_e\in X_0\cap X_{\alpha}$ to the problem \eqref{Linearized PDE0}.
		Moreover, there holds that
		\begin{equation*}
			\norm{f_e}_{X_0\cap X_{\alpha}}\le C\big[(1+M^2)\norm{h_e}_{L^2(\Omega_M)} + \norm{|\p_x|^{\frac{\alpha}{2}}h_e}_{L^2(\Omega_M)}\big],
		\end{equation*}
		where $C$ is a constant independent of $\alpha, M$.
	\end{prop}
	
	\begin{proof}
		The proof proceeds in three steps. \smallskip
		
		\par \textbf{Step 1.} The explicit formula of the solution.\smallskip
		
		Taking the Fourier transform in variable $x$ into \eqref{Linearized PDE0}, we obtain
		\begin{equation}\label{FT of fe}
			\left\{\begin{aligned}
				&i\xi y\hat{f}_e - \p_{y}^2\hat{f}_e = \hat{h}_e,\quad  (\xi,y)\in\R\times(0,M),\\
				&\p_{y}\hat{f}_e\big|_{y=0} = 0,\  \hat{f}_e\big|_{y=M} = 0,\quad \xi\in\R.
			\end{aligned}\right.
		\end{equation}
		Using the Green-function method, we find the unique solution $\hat{f}_e$ of \eqref{FT of fe}, which has an explicit formula 
		\begin{equation}\label{Explicit formula of hatfe}
			\begin{aligned}
				\hat{f}_e(\xi,y) =&\ (i\xi)^{-\frac13}\int_0^M G(y,z; \xi)\hat{h}_e(\xi,z)dz,
			\end{aligned}
		\end{equation}
		with the definition of $G$ given by
		\begin{equation}\label{Green func}
			\begin{aligned}
				&G(y,z;\xi) = \pi s_M \left\{\begin{aligned}
					\mu_M(\xi,y)\mu_0(\xi,z),&\quad 0\le z<y,\\
					\mu_M(\xi,z)\mu_0(\xi,y),&\quad y< z\le M,
				\end{aligned}\right.\\
				&s_M =  \bigg(\frac{Ai((i\xi)^{\frac{1}{3}}M)}{Bi((i\xi)^{\frac{1}{3}}M)} - \frac{Ai'(0)}{Bi'(0)}\bigg)^{-1},\\
				&\mu_M(\xi,y) = Ai((i\xi)^{\frac{1}{3}}y) - \frac{Ai((i\xi)^{\frac{1}{3}}M)}{Bi((i\xi)^{\frac{1}{3}}M)}Bi((i\xi)^{\frac{1}{3}}y),\\
				&\mu_0(\xi,y) = Ai((i\xi)^{\frac{1}{3}}y) - \frac{Ai'(0)}{Bi'(0)}Bi((i\xi)^{\frac{1}{3}}y).
			\end{aligned}
		\end{equation}
		
		\textbf{Step 2.} The low-order estimates of $f_e$. \smallskip

		To this end, we first claim that $s_M$ is bounded. Since the ratio $\frac{Ai'(z)}{Bi'(z)}$ of the Airy functions at $z=0$ is $-\frac{1}{\sqrt{3}}$, we then define
		\begin{equation}\nonumber
			t(z): = \frac{Ai(z)}{Bi(z)} - \frac{Ai'(0)}{Bi'(0)} = \frac{Ai(z)}{Bi(z)} + \frac{1}{\sqrt{3}} = \frac{\sqrt{3}Ai(z) + Bi(z)}{\sqrt{3}Bi(z)}.
		\end{equation}
		It is well-known that $Ai(z)$ and $Bi(z)$ are holomorphic on $\C$ and do not vanish for $\text{Arg}(z)\in(-\frac \pi3,\frac \pi3)$ \cite[Page 15]{VS2010}. For clarity of the proof, we claim that $\sqrt{3}Ai(z) + Bi(z)$ on $\C$ has all its zeros in the rays $\big\{z\in\C|\ \text{Arg}(z)=\frac{\pi}{3}+\frac{2k\pi}{3},\ \ k\in\Z\big\}$ (see details in Remark \ref{rem for sm}). Define a sector with the angle $2\theta$ by
		$$
		C_{\theta}=\big\{z\in\C\big|\ \text{Arg}(z)\in(-\theta,\theta)\big\}.
		$$
		Then, $t(z)$ is a holomorphic and non-vanishing function on $C_{\frac\pi3}$. Moreover, for any $\delta\in(0,\frac\pi3)$, the Airy functions $Ai,Bi$ satisfy the following asymptotic expansions(see \cite[Chapter 9]{Olver2010}):
		\begin{equation}\label{Asym of Ai}
			\begin{aligned}
				&Ai(z)\sim \frac{1}{3^{\frac{2}{3}}\Gamma(\frac{1}{3})} - \frac{1}{3^{\frac{1}{3}}\Gamma(\frac{2}{3})}z,\quad Bi(z)\sim \frac{1}{3^{\frac{1}{6}}\Gamma(\frac{1}{3})} + \frac{3^{\frac{1}{6}}}{\Gamma(\frac{2}{3})}z,\qquad \mbox{as}\ \ z\to 0,\\
				&Ai(z)\sim \frac{e^{-\frac{2}{3}z^{\frac{3}{2}}}}{2\sqrt{\pi}z^{\frac{1}{4}}},\quad Bi(z)\sim \frac{e^{\frac{2}{3}z^{\frac{3}{2}}}}{\sqrt{\pi}z^{\frac{1}{4}}},\qquad \mbox{as}\ \ |z|\to\infty\ \ \big(z\in C_{\frac\pi3-\delta}\big).
			\end{aligned}
		\end{equation}
		By the asymptotic behavior \eqref{Asym of Ai}, we have 
		\begin{equation}\label{Non-vanishing at inf}
			\begin{aligned}
				\lim_{|z|\to\infty,\atop |\text{Arg}(z)|\in [0,\frac\pi3-\delta)}t(z) = \frac{1}{\sqrt{3}}.
			\end{aligned}
		\end{equation}
		Then, combining the holomorphicity and non-vanishing property of $t(z)$ on $C_{\frac\pi3}$ with \eqref{Non-vanishing at inf}, there exists some $c_0>0$ such that
		$$
		|t(z)|\ge c_0^{-1},\quad \text{for }z\in \overline{C_{\frac\pi6}}.
		$$
		Thanks to $s_M = \bigg(\frac{Ai((i\xi)^{\frac{1}{3}}M)}{Bi((i\xi)^{\frac{1}{3}}M)} - \frac{Ai'(0)}{Bi'(0)}\bigg)^{-1} = \frac{1}{t((i\xi)^{\frac13}M)}$, we then obtain
		\begin{equation}\label{sM}
			\begin{aligned}
				|s_M|\le c_0,\qquad \forall\ \xi\in\R,\ M>0.
			\end{aligned}
		\end{equation}
		
		To derive a low-order estimate of $f_e$, we decompose the frequency space into two regimes. For the low-frequency regime where $|\xi|\le M^{-3}$, we observe that $|(i\xi)^{\frac13}y|\le 1$ for all $y\in[0,M]$. By the continuity of $Ai$ and $Bi$ on $\C$, we have
		\begin{equation}
			\begin{aligned}
				&|\mu_0(\xi,y)| = \big|Ai((i\xi)^{\frac{1}{3}}y) + \frac{1}{\sqrt{3}}Bi((i\xi)^{\frac{1}{3}}y)\big|\le C,\quad \forall\ y\in[0,M].
			\end{aligned}
		\end{equation}
		Next, we compute
		\begin{equation}\label{3.10}
			\begin{aligned}
				&\mu_M(\xi,y) = Ai((i\xi)^{\frac{1}{3}}y) - \frac{Ai((i\xi)^{\frac{1}{3}}M)}{Bi((i\xi)^{\frac{1}{3}}M)}Bi((i\xi)^{\frac{1}{3}}y)\\
				&\qquad\quad\ \  = Ai((i\xi)^{\frac{1}{3}}y) - \frac{Ai\big((i\xi)^{\frac{1}{3}}(M-y) + (i\xi)^{\frac13}y\big)}{Bi\big((i\xi)^{\frac{1}{3}}(M-y) + (i\xi)^{\frac13}y\big)}Bi((i\xi)^{\frac{1}{3}}y).
			\end{aligned}
		\end{equation}
		Since $Ai((i\xi)^{\frac{1}{3}}y) - \frac{Ai((i\xi)^{\frac13}y)}{Bi((i\xi)^{\frac13}y)}Bi((i\xi)^{\frac{1}{3}}y) = 0$ and $Bi(z)\not=0$ for ${\rm Arg}(z)\in(-\frac\pi3,\frac\pi3)$, an application of the Mean Value Theorem to \eqref{3.10} yields
		\begin{equation}\label{3.11}
			\begin{aligned}
				&|\mu_M(\xi,y)| \le C|\xi|^{\frac{1}{3}}(M-y),\quad \forall\ y\in[0,M].
			\end{aligned}
		\end{equation}
		It follows that
		\begin{equation}
			\begin{aligned}
				|\hat{f}_e(\xi,y)| =&\ \bigg|\pi s_M(i\xi)^{-\frac13}\bigg\{\mu_M(\xi,y)\cdot\int_0^y \mu_0(\xi,z)\hat{h}_e(\xi,z)dz\\
				&\qquad\qquad\qquad\quad+ \mu_0(\xi,y)\cdot\int_y^M\mu_M(\xi,z)\hat{h}_e(\xi,z)dz\bigg\}\bigg|\\
				\le&\ C|s_M|\cdot|\xi|^{-\frac{1}{3}}\bigg\{\big|\mu_M(\xi,y)\big|\cdot\int_0^y \big|\mu_0(\xi,z)\hat{h}_e(\xi,z)\big|dz\\
				&\qquad\qquad\qquad\quad+ \big|\mu_0(\xi,y)\big|\cdot\int_y^M\big|\mu_M(\xi,z)\hat{h}_e(\xi,z)\big|dz\bigg\}\\
				\le&\ C(M-y)y^{\frac{1}{2}}\norm{\hat{h}_e(\xi,\cdot)}_{L_z^2([0,y])}\quad \text{by H\"older's inequality and }\eqref{3.11}\\
				&\quad + C|\xi|^{-\frac{1}{3}}\norm{|\xi|^{\frac{1}{3}}(M-z)}_{L_z^2([y,M])}\norm{\hat{h}_e(\xi,\cdot)}_{L_z^2([y,M])}\\
				\le&\ C(M-y)M^{\frac{1}{2}}\norm{\hat{h}_e(\xi,\cdot)}_{L_z^2([0,M])},\quad \mbox{for}\ \ y\in[0,M].
			\end{aligned}
		\end{equation}
		Hence, we obtain a point-wise estimate of $\hat{f}_e$ for the low frequency as follows
		\begin{equation}\label{Pointwise estimate in xi0}
			\begin{aligned}
				\norm{(|\xi|^{\frac{1}{3}}y)^{\alpha'}\hat{f}_e(\xi,\cdot)}_{L_y^2(0,M)}\le&\ CM^2\norm{\hat{h}_e(\xi,\cdot)}_{L_z^2(0,M)},
			\end{aligned}
		\end{equation}
		for $\alpha'\ge0$.
		
		For the high frequency $|\xi|\ge M^{-3}$, by the asymptotic behavior \eqref{Asym of Ai} of Airy functions,  we have
		\begin{equation}\label{ABy}
			\begin{aligned}
				&|\mu_M(\xi,y)|=\big|Ai((i\xi)^{\frac{1}{3}}y) - \frac{Ai((i\xi)^{\frac{1}{3}}M)}{Bi((i\xi)^{\frac{1}{3}}M)}Bi((i\xi)^{\frac{1}{3}}y)\big|\\
				&\qquad\qquad\ \le C[1+|\xi|^{\frac{1}{3}}y]^{-\frac{1}{4}}e^{-\frac{2}{3}{\rm Re\ }[(i\xi)^{\frac{1}{3}}y]^{\frac{3}{2}}},\\
				&|\mu_0(\xi,y)| = \big|Ai((i\xi)^{\frac{1}{3}}y) + \frac{1}{\sqrt{3}}Bi((i\xi)^{\frac{1}{3}}y)\big|\\
				&\qquad\qquad \le C[1+|\xi|^{\frac{1}{3}}y]^{-\frac{1}{4}}\bigg[e^{-\frac{2}{3}{\rm Re\ }[(i\xi)^{\frac{1}{3}}y]^{\frac{3}{2}}} + e^{\frac{2}{3}{\rm Re\ }[(i\xi)^{\frac{1}{3}}y]^{\frac{3}{2}}}\bigg]\\
				&\qquad\qquad \le C[1+|\xi|^{\frac{1}{3}}y]^{-\frac{1}{4}}e^{\frac{2}{3}{\rm Re\ }[(i\xi)^{\frac{1}{3}}y]^{\frac{3}{2}}},
			\end{aligned}
		\end{equation}
		and then,
		\begin{equation}\label{ABz}
			\begin{aligned}
				&\norm{\mu_0(\xi,z)}_{L_z^2(0,y)}\le C|\xi|^{-\frac{1}{6}}\bigg(\norm{Ai(z)}_{L_z^2(0,(i\xi)^{\frac{1}{3}}y)} + \norm{Bi(z)}_{L_z^2(0,(i\xi)^{\frac{1}{3}}y)}\bigg)\\
				&\qquad\qquad\qquad\quad \le C|\xi|^{-\frac{1}{6}} [1+|\xi|^{\frac{1}{3}}y]^{-\frac{1}{2}}e^{\frac{2}{3}{\rm Re\ }[(i\xi)^{\frac{1}{3}}y]^{\frac{3}{2}}},\\
				&\norm{\mu_M(\xi,z)}_{L_z^2(y,M)}\le C\norm{[1+|\xi|^{\frac{1}{3}}z]^{-\frac{1}{4}}e^{-\frac{2}{3}{\rm Re\ }[(i\xi)^{\frac{1}{3}}z]^{\frac{3}{2}}}}_{L_z^2(y, M)}\\
				&\qquad\qquad\qquad\qquad\le C|\xi|^{-\frac{1}{6}}\norm{[1+\tilde{z}]^{-\frac{1}{4}}e^{-\frac{2}{3}\tilde{z}^{\frac{3}{2}}}}_{L_{\tilde{z}}^2(({\rm Re\ }[(i\xi)^{\frac{1}{3}}y], M))}\\
				&\qquad\qquad\qquad\qquad\le C|\xi|^{-\frac{1}{6}}[1+|\xi|^{\frac{1}{3}}y]^{-\frac{1}{2}}e^{-\frac{2}{3}{\rm Re\ }[(i\xi)^{\frac{1}{3}}y]^{\frac{3}{2}}}.
			\end{aligned}
		\end{equation}
		Combining \eqref{Explicit formula of hatfe}, \eqref{Green func}, \eqref{sM}, \eqref{ABy} with \eqref{ABz}, we obtain
		\begin{equation}\nonumber
			\begin{aligned}
				&|\hat{f}_e(\xi,y)| \le C|s_M|\cdot|\xi|^{-\frac{1}{3}}\bigg\{\big|\mu_M(\xi,y)\big|\cdot\int_0^y \big|\mu_0(\xi,z)\hat{h}_e(\xi,z)\big|dz\\
				&\qquad\qquad\qquad\qquad\qquad\quad+\big|\mu_0(\xi,y)\big|\cdot\int_y^M\big|\mu_M(\xi,z)\hat{h}_e(\xi,z)\big|dz\bigg\}\\
				&\qquad \le C|\xi|^{-\frac{1}{3}}[1+|\xi|^{\frac{1}{3}}y]^{-\frac{1}{4}}\norm{\hat{h}_e(\xi,\cdot)}_{L_z^2([0,M])}\\
				&\qquad\qquad \cdot\bigg\{e^{-\frac{2}{3}{\rm Re\ }[(i\xi)^{\frac{1}{3}}y]^{\frac{3}{2}}}\cdot\norm{\mu_0(\xi,z)}_{L_z^2([0,y])}\\
				&\qquad\qquad\quad+e^{\frac{2}{3}{\rm Re\ }[(i\xi)^{\frac{1}{3}}y]^{\frac{3}{2}}}\cdot\norm{\mu_M(\xi,z)}_{L_z^2([y,M])}\bigg\}\\
				&\qquad \le C|\xi|^{-\frac{1}{2}}[1+|\xi|^{\frac{1}{3}}y]^{-\frac{3}{4}}\norm{\hat{h}_e(\xi,\cdot)}_{L_z^2([0,M])},\quad \mbox{for}\ \ y\in[0,M].
			\end{aligned}
		\end{equation}
		Hence, we obtain a pointwise estimate of $\hat{f}_e$ for the high frequency as follows
		\begin{equation}\label{Pointwise estimate in xi}
			\begin{aligned}
				&\norm{(|\xi|^{\frac{1}{3}}y)^{\alpha'}\hat{f}_e(\xi,\cdot)}_{L_y^2(0,M)}\le C(\alpha')|\xi|^{-\frac{1}{2}}\norm{\hat{h}_e(\xi,\cdot)}_{L_z^2(0,M)}\cdot|\xi|^{-\frac{1}{6}}\\
				&\qquad \le C|\xi|^{-\frac{2}{3}}\norm{\hat{h}_e(\xi,\cdot)}_{L_z^2(0,M)}
				\le CM^2\norm{\hat{h}_e(\xi,\cdot)}_{L_z^2(0,M)},
			\end{aligned}
		\end{equation}
		for $\alpha'\in[0,\frac{1}{4})$. By \eqref{Pointwise estimate in xi0} and \eqref{Pointwise estimate in xi}, we obtain
		\begin{equation}\label{Lower freq estimate}
			\norm{f_e}_{L^2(\Omega_M)} + \norm{|\p_x|^{\frac{1}{18}}y^{\frac{1}{6}}f_e}_{L^2(\Omega_M)}\le CM^2\norm{h_e}_{L^2(\Omega_M)}\le CM^2\norm{h_e}_{L^2(\Omega_M)}.
		\end{equation}
		Here $C$ is independent of $M$ and $f_e$.\smallskip
		
		\textbf{Step 3.} The high-order estimates of $f_e$. \smallskip

		For $\bar{\alpha}\ge0$, we set
		\begin{equation}\label{Def of B0}
			B_0^2 = |\xi|^{2+\bar{\alpha}}\norm{y\hat{f}_e}_{L_{y}^2}^2 + |\xi|^{\frac{4}{3}+\bar{\alpha}}\norm{\hat{f}_e}_{L_{y}^2}^2.
		\end{equation}
		Multiplying the equation \eqref{FT of fe} by $\xi^{1+\bar{\alpha}} y\hat{f}_e$, we obtain
		\begin{equation}\nonumber
			i|\xi|^{2+\bar{\alpha}}\norm{y\hat{f}_e}_{L_{y}^2}^2 +  \xi^{1+\bar{\alpha}}\norm{y^{\frac{1}{2}}\p_{y}\hat{f}_e}_{L_{y}^2}^2 +  \xi^{1+\bar{\alpha}}\langle \p_{y}\hat{f}_e,\hat{f}_e\rangle_{L_{y}^2} = \langle \hat{h}_e, y\xi^{1+\bar{\alpha}}\hat{f}_e\rangle_{L_y^2}.
		\end{equation}
		Here we used the fact that $\hat{f}_e\big|_{y=M} = 0$. 
		Set
		\begin{equation}\label{Def of B1 B2}
			\begin{aligned}
				B_1^2 =&\ |\xi|^{1+\bar{\alpha}}\norm{y^{\frac{1}{2}}\p_{y}\hat{f}_e}_{L_{y}^2}^2,\\
				B_2^2 =&\ |\xi|^{\frac{5}{3}+\bar{\alpha}}\norm{y^{\frac{1}{2}}\hat{f}_e}_{L_{y}^2}^2 +  |\xi|^{\frac{2}{3}+\bar{\alpha}}\norm{\p_{y}\hat{f}_e}_{L_{y}^2}^2.
			\end{aligned}
		\end{equation}
		Then we have
		\begin{equation}\label{Estimate of B1}
			\begin{aligned}
				|\xi|^{2+\bar{\alpha}}\norm{y\hat{f}_e}_{L_{y}^2}^2 +  B_1^2&\le \big|-\xi^{1+\bar{\alpha}}\langle\p_y\hat{f}_e,\hat{f}_e\rangle_{L_y^2} + \langle \hat{h}_e, y\xi^{1+\bar{\alpha}}\hat{f}_e\rangle_{L_y^2}\big|\\
				&\le \big|-\langle\xi^{\frac{1}{3}+\frac{\bar{\alpha}}{2}}\p_y\hat{f}_e,\xi^{\frac{2}{3}+\frac{\bar{\alpha}}{2}}\hat{f}_e\rangle_{L_y^2} + \langle \xi^{\frac{\bar{\alpha}}{2}}\hat{h}_e, y\xi^{1+\frac{\bar{\alpha}}{2}}\hat{f}_e\rangle_{L_y^2}\big|\\
				& \le C\bigg(|\xi|^{\frac{1}{3}+\frac{\bar{\alpha}}{2}}\norm{\p_{y}\hat{f}_e}_{L_{y}^2} + |\xi|^{\frac{\bar{\alpha}}{2}}\norm{\hat{h}_e}_{L_{y}^2}\bigg)B_0\\
				&\le CB_0(B_2 + |\xi|^{\frac{\bar{\alpha}}{2}}\norm{\hat{h}_e}_{L_{y}^2}).
			\end{aligned}
		\end{equation}
		Here $C$ is independent of $\bar{\alpha}$. Multiplying the equation \eqref{FT of fe} by $|\xi|^{\frac{2}{3}+\bar{\alpha}}\hat{f}_e$, we obtain
		\begin{equation}
			\begin{aligned}
				i\xi|\xi|^{\frac{2}{3}+\bar{\alpha}}\norm{y^{\frac{1}{2}}\hat{f}_e}_{L_{y}^2}^2 +  |\xi|^{\frac{2}{3}+\bar{\alpha}}\norm{\p_{y}\hat{f}_e}_{L_{y}^2}^2 = \langle \hat{h}_e,|\xi|^{\frac{2}{3}+\bar{\alpha}}\hat{f}_e\rangle,\quad \mbox{by}\ \ \hat{f}_e(\cdot,M) = 0.
			\end{aligned}
		\end{equation}
		Then we have
		\begin{equation}\nonumber
			\begin{aligned}
				|\xi|^{\frac{5}{3}+\bar{\alpha}}\norm{y^{\frac{1}{2}}\hat{f}_e}_{L_{y}^2}^2 +|\xi|^{\frac{2}{3}+\bar{\alpha}}\norm{\p_{y}\hat{f}_e}_{L_{y}^2}^2&\le \norm{|\xi|^{\frac{2}{3}+\frac{\bar{\alpha}}{2}}\hat{f}_e}_{L_{y}^2}\norm{|\xi|^{\frac{\bar{\alpha}}{2}}\hat{h}_e}_{L_{y}^2}\\
				&\le CB_0\norm{|\xi|^{\frac{\bar{\alpha}}{2}}\hat{h}_e}_{L_{y}^2}.
			\end{aligned}
		\end{equation}
		Thus, we get by \eqref{Def of B1 B2} that
		\begin{equation}\label{Estimate of B2}
			\begin{aligned}
				B_2^2 \le&\ CB_0\norm{|\xi|^{\frac{\bar{\alpha}}{2}}\hat{h}_e}_{L_{y}^2}.
			\end{aligned}
		\end{equation}
		On the other hand, we have
		\begin{equation}\label{Estimate of B01}
			\begin{aligned}
				|\xi|^{\frac{4}{3}+\bar{\alpha}}\norm{\hat{f}_e}_{L_{y}^2}^2&= |\xi|^{\frac{4}{3}+\bar{\alpha}}\int_0^{M}\p_{y}(y)|\hat{f}_e|^2dy\\
				&= |\xi|^{\frac{4}{3}+\bar{\alpha}}M\hat{f}_e^2\big|_{y=M} - |\xi|^{\frac{4}{3}+\bar{\alpha}}\int_0^{M} y(\bar{\hat{f}}_e\p_{y}\hat{f}_e + \hat{f}_e\p_{y}\bar{\hat{f}}_e)dy\\
				&\le 2\norm{|\xi|^{1+\frac{\bar{\alpha}}{2}}y\hat{f}_e}_{L_{y}^2}\norm{|\xi|^{\frac{1}{3}+\frac{\bar{\alpha}}{2}}\p_{y}\hat{f}_e}_{L_{y}^2}\\
				&\le CB_0B_2.
			\end{aligned}
		\end{equation}
		By \eqref{Def of B0}, \eqref{Def of B1 B2}, \eqref{Estimate of B1}, \eqref{Estimate of B2} and \eqref{Estimate of B01}, we have
		\begin{equation}\label{Estimate of B0 B1 final}
			B_0^2 + B_1^2\le C|\xi|^{\bar{\alpha}}\norm{\hat{h}_e}_{L_y^2}^2.
		\end{equation}
		Then by \eqref{Estimate of B2} and \eqref{Estimate of B0 B1 final}, we get
		\begin{equation}\label{Estimate B2 final}
			\begin{aligned}
				B_2^2\le&\ CB_0^2 + C|\xi|^{\bar{\alpha}}\norm{\hat{h}_e}_{L_y^2}^2\le C|\xi|^{\bar{\alpha}}\norm{\hat{h}_e}_{L_y^2}^2.
			\end{aligned}
		\end{equation}
		Applying Plancherel's theorem, we obtain
		\begin{equation}\label{High freq estimate}
			\begin{aligned}
				\norm{f_e}_{Y_{\bar{\alpha}}}^2 \le&\ C\bigg(\norm{|\p_x|^{\frac{\bar{\alpha}}{2}}y\p_xf_e}_{L^2}^2 + \norm{|\p_x|^{\frac{2}{3} + \frac{\bar{\alpha}}{2}}f_e}_{L^2}^2 + \norm{|\p_x|^{\frac{1+\bar{\alpha}}{2}}y^{\frac{1}{2}}\p_yf_e}_{L^2}^2\\
				&\quad + \norm{|\p_x|^{\frac{1}{3}+\frac{\bar{\alpha}}{2}}\p_yf_e}_{L^2}^2 + \norm{|\p_x|^{\frac{\bar{\alpha}}{2}}\p_y^2f_e}_{L^2}^2\bigg)\\ =&\ C\bigg(\norm{|\xi|^{1+\frac{\bar{\alpha}}{2}}y\hat{f}_e}_{L^2}^2 + \norm{|\xi|^{\frac{2}{3} + \frac{\bar{\alpha}}{2}}\hat{f}_e}_{L^2}^2 + \norm{|\xi|^{\frac{1+\bar{\alpha}}{2}}y^{\frac{1}{2}}\p_y\hat{f}_e}_{L^2}^2\\
				&\quad + \norm{|\xi|^{\frac{1}{3}+\frac{\bar{\alpha}}{2}}\p_y\hat{f}_e}_{L^2}^2 + \norm{|\xi|^{\frac{\bar{\alpha}}{2}}\p_y^2\hat{f}}_{L^2}^2\bigg)\\
				\le&\ C\bigg(\int_{\R}B_0^2 + B_1^2 + B_2^2 + B_0^2 + \norm{|\xi|^{\frac{\bar{\alpha}}{2}}\hat{h}_e}_{L_y^2}^2d\xi\bigg)\\
				\le&\ C\norm{|\xi|^{\frac{\bar{\alpha}}{2}}\hat{h}_e}_{L^2}^2\quad\mbox{by}\ \ \eqref{Estimate of B0 B1 final}\  \mbox{and}\ \eqref{Estimate B2 final}\\
				=&\ C\norm{|\p_x|^{\frac{\bar{\alpha}}{2}}h_e}_{L^2}^2.
			\end{aligned}
		\end{equation}
		
		Finally, it follows from \eqref{Lower freq estimate} and \eqref{High freq estimate} that
		\begin{equation}
			\norm{f_e}_{X_0\cap X_{\alpha}}\le C\big[(1+M^{2})\norm{h_e}_{L^2} + \norm{|\p_x|^{\frac{\alpha}{2}}h_e}_{L^2}\big].
		\end{equation}
		Here $C$ is independent of $\alpha,M$, and $f_e$.
	\end{proof}
	
	\begin{rem}\label{rem for sm}
		The Airy functions $Ai(z),Bi(z)$ admit decompositions \cite[Chapter 9]{Olver2010} as follows
		\begin{equation}\nonumber
			\begin{aligned}
				&Ai(z) = \frac{\sqrt{z}}{3}\big(I_{-\frac13}(\frac23 z^{\frac32}) - I_{\frac13}(\frac23 z^{\frac32})\big),\quad z\in\C,\\
				&Bi(z) =\sqrt{\frac z3}\big(I_{-\frac13}(\frac23 z^{\frac32}) + I_{\frac13}(\frac23 z^{\frac32})\big),\quad z\in\C,\\
				&I_{\nu}(z) = e^{-\frac{\nu\pi i}{2}} J_{\nu}(ze^{\frac{\pi i}{2}}),\quad -\pi\le\text{Arg}(z)\le\frac\pi2,\ \nu\in\R.
			\end{aligned}
		\end{equation}
		Here $J_{\nu}(z) = \sum\limits_{k=0}^{\infty}\frac{(-1)^k\big(\frac{z}{2}\big)^{2k+\nu}}{k!\Gamma(k+\nu+1)}$ denotes the Bessel function satisfying the Bessel equation
		$$
		\Big(z^2\frac{d^2}{dz} + z\frac{d}{dz} + (z^2-\nu^2)\Big)J_{\nu}(z)=0.
		$$
		We then derive
		\begin{equation}\label{Series of sm1}
			\begin{aligned}
				\cC(z):=&\ \sqrt{3}Ai(z) + Bi(z) = \sqrt{\frac{4z}{3}}I_{-\frac13}(\frac23 z^{\frac32})\\
				=&\ \sqrt{\frac{4z}{3}}e^{\frac{\pi i}{6}}J_{-\frac13}(\frac23 z^{\frac32}e^{\frac{\pi i}{2}}),\qquad -\frac{2\pi}{3}\le \text{Arg}(z)\le \frac\pi3.
			\end{aligned}
		\end{equation}
		For $\nu>-1$, the Bessel function $J_{\nu}(z)$ has only real zeros in $\C$ \cite[Page 482]{Watson1922}. Then by \eqref{Series of sm1}, the zeros of $\cC(z)$ in the sector $\big\{z\in\C|\ -\frac{2\pi}{3}\le \text{Arg}(z)\le \frac\pi3\big\}$ lie only on rays $\big\{z\in\C|\ \text{Arg}(z) = -\frac{\pi}{3}\ \text{or}\ \frac{\pi}{3}\big\}$. Notice that $\cC(z)$ is entire and $\cC(z) = \cC(ze^{\frac{2\pi i}{3}})$ for any $z\in\C$, then we have that all zeros of $\cC(z)$ in $\C$ belong to $\big\{z\in\C|\ \text{Arg}(z) = \frac{\pi}{3} + \frac{2k\pi}{3},\ k\in\Z\big\}$.
	\end{rem}

	We next solve the following homogeneous problem
	\begin{equation}\label{Prob omegab}
		\left\{\begin{aligned}
			&y\p_x\omega_b - \p_y^2\omega_b = 0,\quad (x,y)\in \Omega_M,\\
			&\p_y\omega_b\big|_{y=0} = \p_x|\p_x|A,\quad x\in\R.
		\end{aligned}\right.
	\end{equation}
	
	\begin{prop}\label{lem for boundary data}
		Let $\bar{\alpha}\ge0$ and $g=\p_x|\p_x|A\in H^{\frac{1}{6}+\frac{\bar{\alpha}}{2}}\cap \dot{H}^{-1}$. Then there exists a solution $\omega_b\in X_{\bar{\alpha}}$ to the boundary-value problem \eqref{Prob omegab} with the estimate
		\begin{equation}
			\norm{\omega_b}_{X_{\bar{\alpha}}}\le C\norm{|\p_x|^{\frac{1}{6}+\frac{\bar{\alpha}}{2}}g}_{L^2(\R)}\le C\norm{|\p_x|^{\frac{5}{6}+\frac{\bar{\alpha}}{2}}A}_{H^{\frac{4}{3}}(\R)},
		\end{equation}
		where $C>0$ is a constant independent of $\bar{\alpha}$ and $A$.
	\end{prop}
	
	\begin{proof}
		To solve the boundary-value problem \eqref{Prob omegab}, we take the Fourier transform in variable $x$. Then we have
		\begin{equation}\label{BVP omegab}
			\left\{\begin{aligned}
				i\xi y\hat{\omega}_b - \p_y^2\hat{\omega}_b =&\ 0,\quad (\xi,y)\in\R\times(0,M)\\
				\p_y\hat{\omega}_b\big|_{y=0} = \hat{g} =&\ i\xi|\xi|\hat{A},\quad \xi\in\R.
			\end{aligned}\right.
		\end{equation}
		The general solution to the BVP \eqref{BVP omegab} is as follows
		\begin{equation}
			\begin{aligned}
				\hat{\omega}_b(\xi,y) = C(\xi)\bigg(\sqrt{3}Ai((i\xi)^{\frac{1}{3}}y) + Bi((i\xi)^{\frac{1}{3}}y)\bigg) + \frac{Ai((i\xi)^{\frac{1}{3}}y)}{Ai'(0)(i\xi)^{\frac{1}{3}}}\hat{g}.
			\end{aligned}
		\end{equation}
		Taking $C(\xi)=0$, we obtain a solution $\omega_b$ with the explicit formula
		\begin{equation}\label{Explicit form of wb}
			\hat{\omega}_b(\xi,y) = \left\{\begin{aligned}
				&\frac{Ai(e^{\frac{\pi}{6}i}\xi^{\frac{1}{3}}y)}{Ai'(0)e^{\frac{\pi}{6}i}\xi^{\frac{1}{3}}}\hat{g}(\xi),\quad \xi>0,\\
				&\frac{Ai(e^{-\frac{\pi}{6}i}(-\xi)^{\frac{1}{3}}y)}{Ai'(0)e^{-\frac{\pi}{6}i}(-\xi)^{\frac{1}{3}}}\hat{g}(\xi),\quad \xi<0.
			\end{aligned}\right.
		\end{equation}
		Then we have
		\begin{equation}\label{Estimate of omegab lower}
			\begin{aligned}
				\norm{(|\p_x|^{\frac{1}{3}}y)^{\alpha'}\omega_b}_{L^2(\Omega_M)}&\le C\norm{(|\xi|^{\frac{1}{3}}y)^{\alpha'}\hat{\omega}_b}_{L^2(\Omega_M)}\\
				& \le C\norm{\norm{Ai((i\xi)^{\frac{1}{3}}y)(|\xi|^{\frac{1}{3}}y)^{\alpha'}}_{L_y^2}\cdot |\xi|^{\frac{5}{3}}|\hat{A}(\xi)|}_{L_{\xi}^2}\\
				& \le C\norm{|\xi|^{\frac{3}{2}}\hat{A}(\xi)}_{L_{\xi}^2}= C\norm{|\p_x|^{\frac{3}{2}}A}_{L^2(\R)}\\
				& \le C\norm{|\p_x|^{\frac{5}{6}}A}_{H^{\frac{2}{3}}(\R)}.
			\end{aligned}
		\end{equation}
		As $g\in H^{\frac{1}{6} + \frac{\alpha}{2}}(\R)$, we obtain
		\begin{equation}\label{Estimate of omegab higher}
			\begin{aligned}
				\norm{\omega_b}_{Y_{\bar{\alpha}}}=&\ \norm{y|\p_x|^{\frac{\bar{\alpha}}{2}}\p_x\omega_b}_{L_{x,y}^2} + \norm{|\p_x|^{\frac{2}{3}+\frac{\bar{\alpha}}{2}}\omega_b}_{L_{x,y}^2} + \norm{|\p_x|^{\frac{1+\bar{\alpha}}{2}}y^{\frac{1}{2}}\p_y\omega_b}_{L_{x,y}^2}\\
				&\ \ + \norm{|\p_x|^{\frac{1}{3}+\frac{\bar{\alpha}}{2}}\p_y\omega_b}_{L_{x,y}^2} + \norm{|\p_x|^{\frac{\bar{\alpha}}{2}}\p_{y}^2\omega_b}_{L_{x,y}^2}\\
				\le&\ C\norm{|\p_x|^{\frac{1}{6}+\frac{\bar{\alpha}}{2}}g}_{L^2(\R)}\le C\norm{|\p_x|^{\frac{13+3\bar{\alpha}}{6}}A}_{L^2(\R)}.
			\end{aligned}
		\end{equation}
		It follows  from \eqref{Estimate of omegab lower} and \eqref{Estimate of omegab higher} that $\omega_b\in X_{\bar{\alpha}}$ with
		\begin{equation}\nonumber
			\norm{\omega_b}_{X_{\bar{\alpha}}}\le C\norm{|\p_x|^{\frac{5}{6}+\frac{\bar{\alpha}}{2}}A}_{H^{\frac{4}{3}}(\R)}.
		\end{equation}
	\end{proof}
	
	Now we are in a position to address the system \eqref{Eq for omega0-1}, i.e.,  complete the proof of Theorem \ref{Main thm0-0}. Let $(\omega_0,u_0,v_0,A_0)$ be the solution to the linear system
	\begin{equation}\label{Eq of omega0}
		\left\{\begin{aligned}
			&y\p_x\omega_0 - \p_{y}^2\omega_0 = 0,\\
			&u_0 = I_{y}[\omega_0],\quad v_0 = -\p_xI_{y}[u_0],\\
			&\p_y\omega_0\big|_{y=0} = \p_x|\p_x|A_0(x),\quad I_{M}[\omega_0] = A_0(x) + F(x).
		\end{aligned}\right.
	\end{equation}
	Unless otherwise specified, the parameters $\alpha,\bar{\alpha}$ satisfy $\alpha>0$ and $\bar{\alpha}\ge0$.


	\begin{proof}[Proof of Theorem \ref{Main thm0-0}]
		We take \eqref{Explicit form of wb} as a solution to \eqref{Eq of omega0}, that is,
		\begin{equation}\label{Formula0}
			\hat{\omega}_0(\xi,y) = \frac{Ai((i\xi)^{\frac{1}{3}}y)}{Ai'(0)(i\xi)^{\frac{1}{3}}}(i\xi|\xi|\hat{A}_0) = \left\{\begin{aligned}
				&\frac{Ai(e^{\frac{\pi}{6}i}\xi^{\frac{1}{3}}y)}{Ai'(0)e^{\frac{\pi}{6}i}}i\xi|\xi|^{\frac{2}{3}}\hat{A}_0(\xi),\quad \xi>0,\\
				&\frac{Ai(e^{-\frac{\pi}{6}i}(-\xi)^{\frac{1}{3}}y)}{Ai'(0)e^{-\frac{\pi}{6}i}}i\xi|\xi|^{\frac{2}{3}}\hat{A}_0(\xi),\quad \xi<0.
			\end{aligned}\right. 
		\end{equation}
		Here $\hat{f}(\xi,y) = \cF_{x\to\xi}[f](\xi,y)$. It is straight to verify that
		\begin{equation}
			\omega_0(\cdot,y)\big|_{y=M} = \frac{Ai(M\p_x^{\frac{1}{3}})}{Ai'(0)}\p_x^{\frac{2}{3}}|\p_x|A_0.
		\end{equation}
		Substituting the formula \eqref{Formula0} into the integral condition $I_M[\hat{\omega}_0] = \hat{A}_0 + \hat{F}$, we have
		\begin{equation}\label{Formula1}
			\begin{aligned}
				\hat{A}_0 + \hat{F} =&\ \int_0^M\hat{\omega}_0(\xi,y) dy\\
				=&\ \int_0^M Ai(e^{\frac{\sgn\ \xi}{6}\pi i}|\xi|^{\frac{1}{3}}y)dy \cdot\frac{1}{Ai'(0)e^{\frac{\sgn\ \xi}{6}\pi i}}i\xi|\xi|^{\frac{2}{3}}\hat{A}_0\\
				=&\ \int_0^{e^{\frac{\sgn\ \xi}{6}\pi i}|\xi|^{\frac{1}{3}}M}Ai(z)dz\cdot\frac{1}{Ai'(0)e^{\frac{\sgn\ \xi}{3}\pi i}}i\xi|\xi|^{\frac{1}{3}}\hat{A}_0\\
				=&\ \frac{p_M(\xi)}{Ai'(0)e^{\frac{\sgn\ \xi}{3}\pi i}}i\xi|\xi|^{\frac{1}{3}}\hat{A}_0.
			\end{aligned}
		\end{equation}
		Here
		\begin{equation}\nonumber
			\begin{aligned}
				p_M(\xi) =&\ \int_0^{(i\xi)^{\frac{1}{3}}M}Ai(z)dz.
			\end{aligned}
		\end{equation}
		Indeed, the integral is well-defined, since $Ai(z)$ is analytic.  Let
		\begin{equation}\label{m-multiplier}
			\begin{aligned}
				m(\xi) =&\ 1 - \frac{p_M(\xi)}{Ai'(0)e^{\frac{{\rm sgn\ }\xi}{3}\pi i}}i\xi|\xi|^{\frac{1}{3}}.
			\end{aligned}
		\end{equation}
		Then we get by \eqref{Formula1} that
		\begin{equation}\label{FormulaA0}
			\hat{A}_0 = -\frac{1}{m(\xi)}\hat{F}.
		\end{equation}
		It follows from \eqref{Formula0} that
		\begin{equation}\label{Formulaomega0}
			\begin{aligned}
				\hat{\omega}_0(\xi,y) = -\frac{Ai(e^{\frac{{\rm sgn\ }\xi}{6}\pi i}\xi^{\frac{1}{3}}y)}{Ai'(0)e^{\frac{{\rm sgn\ }\xi}{6}\pi i}}\frac{i\xi|\xi|^{\frac{2}{3}}}{m(\xi)}\hat{F}(\xi).
			\end{aligned}
		\end{equation}
		
		To ensure that the formulas \eqref{FormulaA0} and \eqref{Formulaomega0} are well-defined, we need to prove $m(\xi)\not=0$ for any $\xi\in\R$. We define
		$$
		p_{0}:=\mathop{sup}\limits_{{\rm Arg\ }z\in\{\pm\frac{\pi}{6}\}}\bigg|\int_0^{z}Ai(\zeta)d\zeta\bigg|.
		$$
		Noting that $0\le p_0<\infty$, we obtain
		$$
		a_0:=\bigg(\frac{4p_0}{3|Ai'(0)|}\bigg)^{-\frac{3}{4}}>0.
		$$
		It follows that
		\begin{equation}\label{Multiplier m 1}
			m(\xi)\ge \frac{1}{4}\quad \forall\ |\xi|\le a_0.
		\end{equation}
		Using the following identity
		\begin{equation}
			\begin{aligned}
				\int_0^{\infty}Ai(z)dz = \frac{1}{3},
			\end{aligned}
		\end{equation}
		the multiplier $m(\xi)$ converges to $\frac{1}{3}$ uniformly on $[a_0,\infty)$  as $M\to\infty$. A direct calculation yields 
		\begin{equation}\label{Multiplier m 2}
			\bigg|1 - \frac{1}{3Ai'(0)e^{\frac{{\rm sgn\ }\xi}{3}\pi i}}i\xi|\xi|^{\frac{1}{3}}\bigg|^{-1}\le C(1+|\xi|)^{-\frac{4}{3}},\quad \forall\ \xi\in\R.
		\end{equation}
		Then there exists a large enough constant $M_0>0$ such that for $M\ge M_0$, we get by \eqref{Multiplier m 1} and \eqref{Multiplier m 2} that
		\begin{equation}
			|m(\xi)|^{-1} = \bigg|1 - \frac{p_{M}(\xi)}{Ai'(0)e^{\frac{{\rm sgn\ }\xi}{3}\pi i}}i\xi|\xi|^{\frac{1}{3}}\bigg|^{-1}\le C(1+|\xi|)^{-\frac{4}{3}},\quad \forall\ \xi\in\R.
		\end{equation}
		Thus, $\hat{A}_0$ and $\hat{\omega}_0$ are well-defined. Moreover,  there exists a constant $C>0$ such that 
		\begin{equation}\label{Estimate0}
			\norm{m(D)^{-1}f}_{H^{\frac{4}{3}}(\R)}\le C\norm{f}_{L^2(\R)}.
		\end{equation}
		
		Using Proposition \ref{lem for boundary data} and \eqref{Estimate0}, for any $\bar{\alpha}\ge0$, we obtain the $X_{\bar{\alpha}}$-estimate of $\omega_0$ as follows 
		\begin{equation}\nonumber
			\begin{aligned}
				\norm{\omega_0}_{X_{\bar{\alpha}}}\le C\norm{|\p_x|^{\frac{5}{6}+\frac{\bar{\alpha}}{2}}A_0}_{H^{\frac{4}{3}}(\R)}\le C\norm{|\p_x|^{\frac{5}{6}+\frac{\bar{\alpha}}{2}}F}_{L^2(\R)}\le C\norm{|\p_x|^{\frac{5}{6}}F}_{\dot{H}^{\frac{\bar{\alpha}}{2}}(\R)}.
			\end{aligned}
		\end{equation}

		For the bound of displacement $A_0$ and the velocity $u_0(x)$, we have
		\begin{equation}\nonumber
			\begin{aligned}
				&\norm{u_0}_{L^{\infty}(\Omega_M)} + \norm{A_0}_{L^{\infty}(\R)}\le \norm{\omega_0}_{L_x^{\infty}L_y^1}\le \norm{\omega_0}_{L_y^1L_x^{\infty}}\le \norm{\omega_0}_{X_0}.
			\end{aligned}
		\end{equation}
		
		This completes the proof.
	\end{proof}
	
	\begin{rem}\label{rmk: M}
		Note that ${\rm Re\ }\bigg(\frac{p_M(\xi)}{e^{\frac{{\rm sgn\ }\xi}{3}\pi i}}\bigg)> 0$ for $|\xi|^{\frac{1}{3}}M>0$ by numberical results (see figure \ref{Analysis about m}, where $r=|\xi|^{\frac{1}{3}}M$). Then by the continuity of $m(\xi)$, we have $$\frac{1}{|m(\xi)|}\le C(1+|\xi|)^{-\frac{4}{3}}\text{ for any } \xi\in\R\text{ and }M>0,$$ where $C>0$ depends on $M^{-1}$.
		\begin{figure}[h]
			\centering
			\includegraphics[scale=0.2]{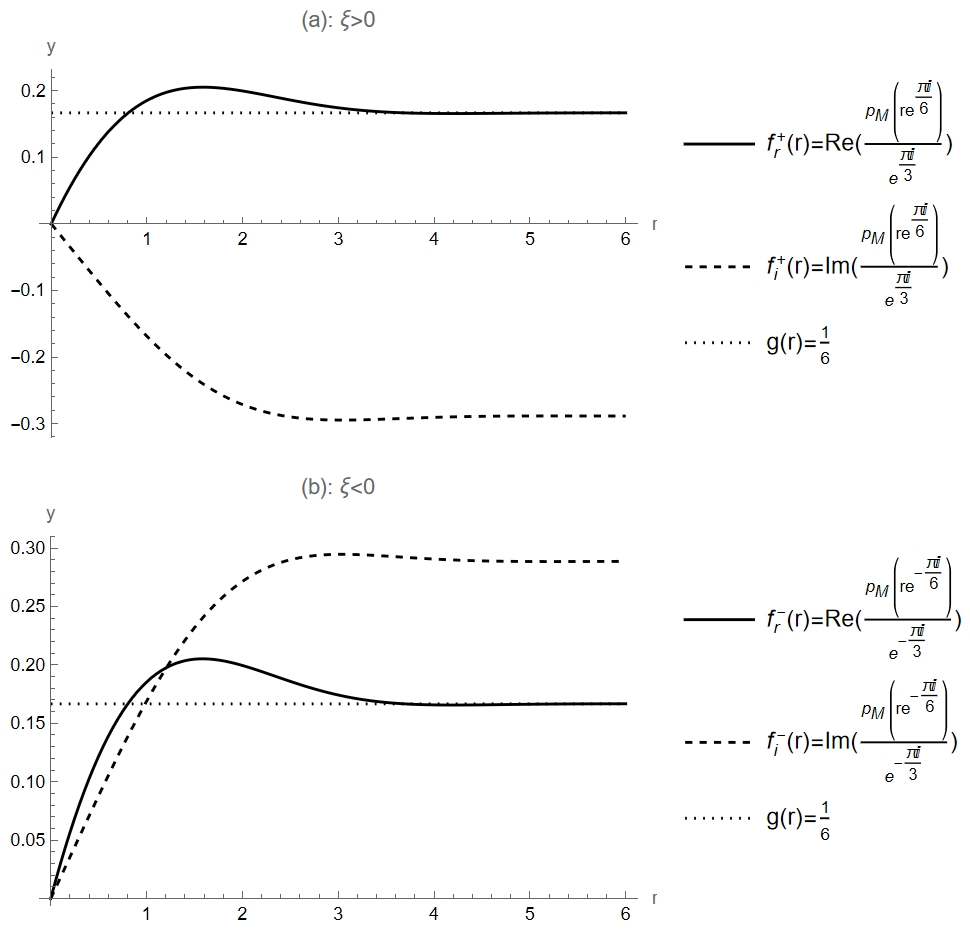}
			\caption{Analysis on $m(\xi)$.}
			\label{Analysis about m}
		\end{figure}
	\end{rem}
	
	
	\section{Existence and uniqueness of classical solution}
	
	In this section, we establish the existence and uniqueness of classical solution to the nonlinear system \eqref{Perturbation system0}, i.e., 
	\begin{equation}\label{Perturbation system00}
		\left\{\begin{aligned}
			&y\p_x\omega - \p_y^2\omega = -u\p_x\omega - v\p_y\omega,\\
			&u = I_{y}[\omega],\quad v = -\p_xI_{y}[u],\\
			&\p_y\omega\big|_{y=0} = \p_x|\p_x|A,\quad I_{M}[\omega] = A(x) + F(x).
		\end{aligned}\right.
	\end{equation}
	To construct a solution to the system \eqref{Perturbation system00}, we use the iteration method. Let
	\begin{align*} (\bar{\omega},\bar{A},\bar{u},\bar{v}) = (\omega - \omega_0, A - A_0, u - u_0, v-v_0).
	\end{align*}
	Then the system \eqref{Perturbation system00} is reduced to the following BVP:
	\begin{equation}\label{System of baromega0}
		\left\{\begin{aligned}
			&y\p_x\bar{\omega} - \p_y^2\bar{\omega} = -(\bar{u} + u_0)\p_x(\bar{\omega} + \omega_0) - (\bar{v} + v_0)\p_y(\bar{\omega} + \omega_0),\\
			&\bar{u} = I_y[\bar{\omega}],\quad \bar{v} = -\p_xI_y[\bar{u}],\\
			&\p_y\bar{\omega}\big|_{y=0} = \p_x|\p_x|\bar{A},\quad I_{M}[\bar{\omega}]=\bar{A}.
		\end{aligned}\right.
	\end{equation}
	We introduce the iterated system as follows
	\begin{equation}\label{System of baromega}
		\left\{\begin{aligned}
			&y\p_x\bar{\omega}_{k+1} - \p_y^2\bar{\omega}_{k+1} = -(\bar{u}_k + u_0)\p_x(\bar{\omega}_k + \omega_0) - (\bar{v}_k + v_0)\p_y(\bar{\omega}_k + \omega_0),\\
			&\bar{u}_{k+1} = I_y[\bar{\omega}_{k+1}],\quad \bar{v}_{k+1} = -\p_xI_y[\bar{u}_{k+1}],\\
			&\bar{u}_{k} = I_y[\bar{\omega}_{k}],\quad \bar{v}_{k} = -\p_xI_y[\bar{u}_{k}],\\
			&\p_y\bar{\omega}_{k+1}\big|_{y=0} = \p_x|\p_x|\bar{A}_{k+1},\quad I_{M}[\bar{\omega}_{k+1}]=\bar{A}_{k+1}.
		\end{aligned}\right.
	\end{equation}

	First of all, we present the following uniform estimate for the iterated system \eqref{System of baromega}.
	
	\begin{prop}\label{prop interation}
		Given $\alpha\in(\frac23,\frac73]$, assume that $(\bar{\omega}_k,\bar{A}_k)\in(X_0\cap X_{\alpha})\times  X_{\alpha,\infty}$. Then there exists a solution $(\bar{\omega}_{k+1},\bar{A}_{k+1})\in (X_0\cap X_{\alpha})\times  X_{\alpha,\infty}$ to \eqref{System of baromega}, which holds that for $\e\in(0,\frac16]$, 
		\begin{equation}
			\begin{aligned}
				&\norm{\bar{\omega}_{k+1}}_{X_0\cap X_{\alpha}} + \norm{\bar{A}_{k+1}}_{ X_{\alpha,\infty}}\\
				&\le CM^{\frac{5+3\e}{2}}\Big[\big(\norm{\bar{\omega}_k}_{X_0\cap X_{\alpha}} + \norm{\bar{A}_{k}}_{X_{\alpha,\infty}}\big)^2 + \norm{F}_{H^2(\R)}^2\Big].
			\end{aligned}
		\end{equation}
		Here $C=C(\alpha,\e)>0$ is a constant.
	\end{prop}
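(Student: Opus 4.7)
The strategy is to treat \eqref{System of baromega} as a linear triple-deck problem with a prescribed right-hand side and reuse the decomposition already developed for Theorem \ref{Main thm0-0}. Set
\begin{equation*}
\bh_k:=-(\bar u_k+u_0)\p_x(\bar\omega_k+\omega_0)-(\bar v_k+v_0)\p_y(\bar\omega_k+\omega_0),
\end{equation*}
and write $\bar\omega_{k+1}=\bar\omega_{k+1}^e+\bar\omega_{k+1}^b$, where $\bar\omega_{k+1}^e$ solves the forced problem \eqref{Linearized PDE0} with source $\bh_k$ and zero boundary data, and $\bar\omega_{k+1}^b$ is a homogeneous solution absorbing the coupled boundary condition $\p_y\bar\omega_{k+1}^b|_{y=0}=\p_x|\p_x|\bar A_{k+1}$ together with $I_M[\bar\omega_{k+1}^b]=\bar A_{k+1}-I_M[\bar\omega_{k+1}^e]$. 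This separates the nonlinearity (trapped in $\bar\omega_{k+1}^e$) from the determination of $\bar A_{k+1}$ (trapped in $\bar\omega_{k+1}^b$).

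For the forced piece I apply Lemma \ref{lem for nonlinear terms} to get
\begin{equation*}
\|\bar\omega_{k+1}^e\|_{X_0\cap X_\alpha}\le C\bigl[(1+M^2)\|\bh_k\|_{L^2}+\||\p_x|^{\alpha/2}\bh_k\|_{L^2}\bigr].
\end{equation*}
I then bound $\bh_k$ via Proposition \ref{prop for forced terms} and Remark \ref{rem for forced terms} with $\omega=\bar\omega_k+\omega_0$, and with trace $A_M=A_0+\bar A_k+F$ at $y=M$ dictated by the integral condition. The constraint $\alpha>\tfrac23$ is precisely what makes that proposition applicable. Since Theorem \ref{Main thm0-0} controls $(\omega_0,A_0)$ by $\|F\|_{H^2}$ (and the restriction $\alpha\le\tfrac73$ comes from $\tfrac56+\tfrac\alpha2\le 2$ there), everything reduces to
\begin{equation*}
\|\bar\omega_{k+1}^e\|_{X_0\cap X_\alpha}\le CM^{\frac{5+3\e}{2}}\bigl(\|\bar\omega_k\|_{X_0\cap X_\alpha}+\|\bar A_k\|_{X_{\alpha,\infty}}+\|F\|_{H^2(\R)}\bigr)^2,
\end{equation*}
where the exponent $\tfrac{5+3\e}{2}$ arises from combining the Green-function loss $1+M^2$ with the nonlinear loss $M^{(1+3\e)/2}$; the $H_x^{\alpha/2}L_y^2$ contribution, which produces only a factor $M$, is absorbed into the larger $M^2$ term.

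The displacement and boundary piece are recovered by the multiplier construction of Theorem \ref{Main thm0-0}. Inserting the explicit Fourier formula \eqref{Explicit form of wb} for $\bar\omega_{k+1}^b$ into the coupled conditions yields the algebraic identity $m(\xi)\,\widehat{\bar A_{k+1}}=-\widehat{I_M[\bar\omega_{k+1}^e]}$, and the bound $|m(\xi)|^{-1}\le C(1+|\xi|)^{-4/3}$ combined with Proposition \ref{prop for IMf} gives
\begin{equation*}
\||\p_x|^{5/6+\alpha/2}\bar A_{k+1}\|_{H^{4/3}(\R)}\le C\|\bar\omega_{k+1}^e\|_{X_0\cap X_\alpha}.
\end{equation*}
The $L^\infty$ component of the $X_{\alpha,\infty}$-norm follows from $\bar A_{k+1}=I_M[\bar\omega_{k+1}]$ and the embedding $X_0\hookrightarrow L^1_yL^\infty_x$ of Proposition \ref{prop for estimate of f in X0}, while Lemma \ref{lem for boundary data} converts the displacement bound into $\|\bar\omega_{k+1}^b\|_{X_0\cap X_\alpha}\le C\|\bar\omega_{k+1}^e\|_{X_0\cap X_\alpha}$. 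Summing the three contributions completes the argument. The main technical obstacle is the bookkeeping of $M$-growth: I must verify that the constants arising from $m(D)^{-1}$, the multiplier $|\xi|^{5/6+\alpha/2}/(1+|\xi|)^{4/3}$, Proposition \ref{prop for IMf}, and the embedding in Proposition \ref{prop for estimate of f in X0} are all $M$-independent, so that the entire factor $M^{(5+3\e)/2}$ is concentrated in the single estimate for $\bar\omega_{k+1}^e$ and no extra power sneaks in through the reconstruction of $\bar A_{k+1}$ and $\bar\omega_{k+1}^b$.
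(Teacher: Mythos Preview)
Your proposal is correct and follows essentially the same approach as the paper: the same decomposition $\bar\omega_{k+1}=\bar\omega_{k+1}^e+\bar\omega_{k+1}^b$, the same use of Lemma \ref{lem for nonlinear terms} and Proposition \ref{prop for forced terms} for the forced piece, and the same multiplier identity $\bar A_{k+1}=m(D)^{-1}I_M[\bar\omega_{k+1}^e]$ together with Lemma \ref{lem for boundary data} and Proposition \ref{prop for IMf} for the boundary piece. Your explicit handling of the $L^\infty$ component of $\|\bar A_{k+1}\|_{X_{\alpha,\infty}}$ via $\bar A_{k+1}=I_M[\bar\omega_{k+1}]$ and the embedding of Proposition \ref{prop for estimate of f in X0} is in fact more careful than the paper's own presentation, which glosses over this term.
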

	
	\begin{proof}
		We decompose $\bar{\omega}_{k+1} = \bar{\omega}_{e,k+1} + \bar{\omega}_{b,k+1}$, where $\bar{\omega}_{e,k+1}$ and $\bar{\omega}_{b,k+1}$ satisfy
		\begin{equation}\nonumber
			\left\{\begin{aligned}
				&y\p_x\bar{\omega}_{e,k+1} - \p_y^2\bar{\omega}_{e,k+1} = \bar{h}_k,\\
				&\p_y\bar{\omega}_{e,k+1}\big|_{y=0}= 0,\quad \bar{\omega}_{e,k+1}\big|_{y=M} = 0,
			\end{aligned}\right.
		\end{equation}
		and 
		\begin{equation}\nonumber
			\left\{\begin{aligned}
				&y\p_x\bar{\omega}_{b,k+1} - \p_y^2\bar{\omega}_{b,k+1} = 0,\\
				&\p_y\bar{\omega}_{b,k+1}\big|_{y=0}= \p_x|\p_x|\bar{A}_{k+1},
			\end{aligned}\right.
		\end{equation}
		together with
		\begin{equation}\label{Eq of barAk+1}
			I_M[\bar{\omega}_{e,k+1} + \bar{\omega}_{b,k+1}] = \bar{A}_{k+1}.
		\end{equation}
		Here
		$$\bar{h}_k = -(\bar{u}_k+u_0)\p_x(\bar{\omega}_k + \omega_0) - (\bar{v}_k + v_0)\p_y(\bar{\omega}_k + \omega_0).$$        
		
		By Proposition \ref{lem for nonlinear terms}, we obtain the unique solution $\bar{\omega}_{e,k+1}$ with the form
		\begin{equation}\label{Formala of we,k+1}
			\begin{aligned}
				\bar{\omega}_{e,k+1} =&\  \pi s_M(i\xi)^{-\frac{1}{3}}\bigg\{\mu_{M}(\xi,y)\int_0^y\mu_{0}(\xi,z)\hat{\bar{h}}_k(\xi,z)dz\\
				&\qquad\qquad\qquad +\mu_{0}(\xi,y)\int_y^M \mu_{M}(\xi,z)\hat{\bar{h}}_k(\xi,z)dz\bigg\}.
			\end{aligned}
		\end{equation}
		By Proposition \ref{lem for boundary data}, we can find a solution $\bar{\omega}_{b,k+1}$ with
		\begin{equation}\label{Formala of wb,k+1}
			\hat{\omega}_{b,k+1}(\xi,y) = \left\{\begin{aligned}
				&\frac{Ai(e^{\frac{\pi}{6}i}\xi^{\frac{1}{3}}y)}{Ai'(0)e^{\frac{\pi}{6}i}\xi^{\frac{1}{3}}}i\xi|\xi|\hat{\bar{A}}_{k+1}(\xi),& \xi>0,\\
				&\frac{Ai(e^{-\frac{\pi}{6}i}(-\xi)^{\frac{1}{3}}y)}{Ai'(0)e^{-\frac{\pi}{6}i}(-\xi)^{\frac{1}{3}}}i\xi|\xi|\hat{\bar{A}}_{k+1}(\xi),&xi<0.
			\end{aligned}\right.
		\end{equation}
		In view of \eqref{Eq of barAk+1}, we have
		\begin{equation}\label{Form of Ak+1}
			\bar{A}_{k+1} = m(D)^{-1}I_{M}[\bar{\omega}_{e,k+1}].
		\end{equation}
		Here $m=m(\xi)$ is defined in \eqref{m-multiplier}.
		
		By Proposition \ref{lem for nonlinear terms}, Theorem \ref{Main thm0-0} and Proposition \ref{prop for forced terms}, we obtain
		\begin{equation}\label{Est of omega e,k+1}
			\begin{aligned}
				\norm{\bar{\omega}_{e,k+1}}_{X_0\cap X_{\alpha}}\le&\ C\big((1+M^2)\norm{\bar{h}_k}_{L^2(\Omega_M)} + \norm{|\p_x|^{\frac{\alpha}{2}}\bar{h}_k}_{L^2(\Omega_M)}\big)\\
				\le&\ CM^{\frac{5+3\e}{2}}\big(\norm{\bar{\omega}_0}_{X_0\cap X_{\alpha}}^2 + \norm{\bar{\omega}_k}_{X_0\cap X_{\alpha}}^2 + \norm{\p_xA_0}_{L^{\infty}\cap \dot{H}^{\frac{\alpha}{2}}}^2\\
				&\qquad + \norm{\p_xA_k}_{L^{\infty}\cap \dot{H}^{\frac{\alpha}{2}}}^2 + \norm{\p_xF}_{L^{\infty}\cap \dot{H}^{\frac{\alpha}{2}}}^2\big)\\
				\le&\ CM^{\frac{5+3\e}{2}}\big(\norm{\bar{\omega}_k}_{X_0\cap X_{\alpha}}^2 + \norm{A_k}_{ X_{\alpha,\infty}}^2 + \norm{F}_{H^2(\R)}^2\big).
			\end{aligned}
		\end{equation}
		By Proposition \ref{lem for boundary data}, Lemma \ref{prop for IMf} and \eqref{Form of Ak+1}, we have
		\begin{equation}\label{Est of Ak+1}
			\begin{split}
				\norm{\bar{A}_{k+1}}_{X_{\alpha,\infty}}&= \norm{|\p_x|^{\frac{5}{6}}\bar{A}_{k+1}}_{H^{\frac{4}{3}}(\R)} + \norm{|\p_x|^{\frac{5}{6}+\frac{\alpha}{2}}\bar{A}_{k+1}}_{H^{\frac{4}{3}}(\R)}\\
				&\le C\norm{|\p_x|^{\frac{5}{6}}I_M[\bar{\omega}_{e,k+1}]}_{L^2(\R)} + C\norm{|\p_x|^{\frac{5}{6}+\frac{\alpha}{2}}I[\bar{\omega}_{e,k+1}]}_{L^2(\R)}\\
				&\le C\norm{\bar{\omega}_{e,k+1}}_{X_0\cap X_{\alpha}},\\
				\norm{\bar{\omega}_{b,k+1}}_{X_0\cap X_{\alpha}}&\le C\norm{|\p_x|^{\frac{5}{6}}\bar{A}_{k+1}}_{H^{\frac{4}{3}}(\R)} + C\norm{|\p_x|^{\frac{5}{6}+\frac{\alpha}{2}}\bar{A}_{k+1}}_{H^{\frac{4}{3}}(\R)}\\
				&\le C\norm{\bar{\omega}_{e,k+1}}_{X_0\cap X_{\alpha}},
			\end{split}
		\end{equation}
		from which and  \eqref{Est of omega e,k+1}, we arrive at
		\begin{equation}\nonumber
			\begin{aligned}
				&\norm{\bar{\omega}_{k+1}}_{X_0\cap X_{\alpha}} + \norm{\bar{A}_{k+1}}_{ X_{\alpha,\infty}}\\
				&\le \norm{\bar{\omega}_{e,k+1}}_{X_0\cap X_{\alpha}} + \norm{\bar{\omega}_{b,k+1}}_{X_0\cap X_{\alpha}} + \norm{\bar{A}_{k+1}}_{X_0\cap X_{\alpha}}\\
				& \le C\norm{\bar{\omega}_{e,k+1}}_{X_0\cap X_{\alpha}}\\
				&\le CM^{\frac{5+3\e}{2}}\Big[\big(\norm{\bar{\omega}_k}_{X_0\cap X_{\alpha}}^2 + \norm{A_k}_{ X_{\alpha,\infty}}^2\big) + \norm{F}_{H^2(\R)}^2\Big].
			\end{aligned}
		\end{equation}
		
		The proof is completed.
	\end{proof}

	To obtain the uniform estimates of the solution sequence,  we use the following basic lemma, which can be proved by the induction.
	
	\begin{lem}\label{lem for iteration}
		Let $\alpha_i,\beta_i\ (i=1,2)$ be positive constants. Assume that $\{a_k\}_{k\ge0},\ \{b_{k}\}_{k\ge1}$ are two non-negative sequences satisfying
		\begin{equation}\nonumber
			a_{k+1}\le \alpha_1 a_k^2 + \alpha_2\quad \mbox{for any }k\ge0;\quad  b_{k+1}\le \beta_1 b_k + \beta_2b_k^2\quad \mbox{for any }k\ge1.
		\end{equation}
		Assume that $\alpha_2\le\frac{1}{4\alpha_1}$ and $\beta_1<1$. Set $a_0=0$ and $b_1\le\frac{\gamma-\beta_1}{\beta_2}$ for some $\gamma\in(\beta_1,1)$. Then we have 
		\begin{equation}\nonumber
			0\le \sup_{k\ge0} a_k\le 2\alpha_2,\qquad \sum_{k=1}^{\infty}b_k \le\frac{b_1}{1-\gamma}.
		\end{equation}
	\end{lem}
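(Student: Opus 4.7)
The plan is to handle the two sequences by independent inductive arguments; the hypotheses are arranged precisely so that the recurrences define invariant regions in which a quadratic correction is dominated by a linear factor.

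For the sequence $\{a_k\}$, I would show by induction that $a_k\le 2\alpha_2$ for every $k\ge 0$. The base case is $a_0=0\le 2\alpha_2$. For the inductive step, assuming $a_k\le 2\alpha_2$, the recurrence gives
\begin{equation*}
a_{k+1}\le \alpha_1(2\alpha_2)^2+\alpha_2=\alpha_2(4\alpha_1\alpha_2+1),
\end{equation*}
and the hypothesis $\alpha_2\le \frac{1}{4\alpha_1}$ yields $4\alpha_1\alpha_2\le 1$, so $a_{k+1}\le 2\alpha_2$. This closes the induction and gives $\sup_{k\ge 0}a_k\le 2\alpha_2$.

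For the sequence $\{b_k\}$, I would prove the geometric bound $b_k\le \gamma^{k-1}b_1$ for all $k\ge 1$ by induction. The base case $k=1$ is immediate. For the step, assume $b_k\le \gamma^{k-1}b_1\le b_1$, where the last inequality uses $\gamma<1$. Then $\beta_2 b_k\le \beta_2 b_1\le \gamma-\beta_1$ by the hypothesis on $b_1$, so
\begin{equation*}
b_{k+1}\le \beta_1 b_k+\beta_2 b_k\cdot b_k\le (\beta_1+\beta_2 b_k)\,b_k\le \gamma\, b_k\le \gamma^k b_1.
\end{equation*}
Summing the geometric bound yields $\sum_{k=1}^\infty b_k\le b_1\sum_{k=0}^\infty \gamma^k=\frac{b_1}{1-\gamma}$.

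Neither step is genuinely hard; the only subtle point is recognising the right invariant region in each case. For $\{a_k\}$ the region is $[0,2\alpha_2]$, and the condition $\alpha_2\le \tfrac{1}{4\alpha_1}$ is exactly what makes this region forward-invariant under $x\mapsto \alpha_1 x^2+\alpha_2$. For $\{b_k\}$ the region is $[0,(\gamma-\beta_1)/\beta_2]$, and membership in it forces the effective contraction factor $\beta_1+\beta_2 b_k$ to stay below $\gamma<1$, which is precisely what is needed to close the geometric decay estimate and hence the summability of the series.
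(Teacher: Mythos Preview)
Your argument is correct and is precisely the induction the paper has in mind; the paper does not spell out a proof but merely remarks that the lemma ``can be got [by] the induction method.'' Your choice of invariant regions and the resulting geometric decay for $\{b_k\}$ are the natural way to carry this out.
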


	With the above preparations, we complete the proof of Theorem \ref{Main thm0}.
	
	\begin{proof}[Proof of Theorem \ref{Main thm0}]
		The proof consists of three steps.\smallskip
		
		\textbf{Step 1.} Boundedness of the solution sequence.\smallskip
		
		Taking $\bar{\omega}_0 = \bar{A}_0=0$, by Proposition \ref{prop interation}, we obtain a sequence $\{(\bar{\omega}_k,\bar{A}_k)\}_{k\ge0}$ in $(X_0\cap X_{\alpha})\times  X_{\alpha,\infty}$.
		Set $a_k = \norm{\bar{\omega}_k}_{X_0\cap X_{\alpha}} + \norm{\bar{A}_{k}}_{ X_{\alpha,\infty}}$. Clearly, we have $a_0 = 0$. By Proposition \ref{prop interation}, we have
		\begin{equation}\nonumber
			a_{k+1}\le CM^{\frac{5+3\e}{2}}a_k^2 +CM^{\frac{5+3\e}{2}}\norm{F}_{H^2(\R)}^2.
		\end{equation}
		By Lemma \ref{lem for iteration}, for $4C^2M^{5+3\e}\norm{F}_{H^2(\R)}^2\le1$ (that is, $\norm{F}_{H^2(\R)}\le \frac{1}{2CM^{\frac{5+3\e}{2}}}$), we obtain
		\begin{equation}\nonumber
			\sup_{k\ge0}\big(\norm{\bar{\omega}_k}_{X_0\cap X_{\alpha}} + \norm{\bar{A}_{k}}_{ X_{\alpha,\infty}}\big) = \sup_{k\ge0}a_k\le 2CM^{\frac{5+3\e}{2}}\norm{F}_{H^2(\R)}^2.
		\end{equation}
		
		\textbf{Step 2.} Convergence of the solution sequence. \smallskip
		
		Set
		$$\begin{aligned}
			\rw_k =&\ \bar{\omega}_{k} - \bar{\omega}_{k-1}\qquad \mbox{for}\ \ k\ge1,\\
			\ru_k =&\ \bar{u}_{k} - \bar{u}_{k-1} = I_y[\rw_k]\qquad \mbox{for}\ \ k\ge1,\\
			\rv_k =&\ \bar{v}_k - \bar{v}_{k-1} = -\p_xI_y[\ru_k]\qquad \mbox{for}\ \ k\ge1,\\
			\rA_k =&\ \bar{A}_{k} - \bar{A}_{k-1}\qquad \mbox{for}\ \ k\ge1.
		\end{aligned}$$
		By subtracting the equation \eqref{System of baromega} corresponding to $\bar{\omega}_{k+1}$ from the equation \eqref{System of baromega} corresponding to $\bar{\omega}_{k}$, we obtain
		\begin{equation}\nonumber
			\left\{\begin{aligned}
				&y\p_x\rw_{k+1} - \p_y^2\rw_{k+1} = -\big[(\bar{u}_k + u_0)\p_x\rw_k + (\bar{v}_k + v_0)\p_y\rw_k\\
				&\qquad\qquad\qquad\qquad\qquad\  + \ru_k\p_x(\bar{\omega}_k + \omega_0) - \rv_k\p_y(\bar{\omega}_k + \omega_0)\big],\\
				&\ru_{k+1} = I_y[\rw_{k+1}],\quad \rv_{k+1} = -\p_xI_y[\ru_{k+1}],\\
				&\ru_{k} = I_y[\rw_{k}],\quad \rv_{k} = -\p_xI_y[\ru_{k}],\\
				&\p_y\rw_{k+1}\big|_{y=0} = \p_x|\p_x|\rA_{k+1},\quad I_{M}[\rw_{k+1}]=\rA_{k+1}.
			\end{aligned}\right.
		\end{equation}
		We  denote
		$$\begin{aligned}
			&\rh_k = \bar{h}_{k} - \bar{h}_{k-1} = -(\bar{u}_k + u_0)\p_x\rw_k - (\bar{v}_k + v_0)\p_y\rw_k - \ru_k\p_x(\bar{\omega}_k + \omega_0) - \rv_k\p_y(\bar{\omega}_k + \omega_0),\\
			&\rw_{e,k} = \bar{\omega}_{e,k}-\bar{\omega}_{e,k-1},\quad \rw_{b,k} = \bar{\omega}_{b,k}-\bar{\omega}_{b,k-1}.
		\end{aligned}$$
		Then by the explicit formulas \eqref{Formala of we,k+1}, \eqref{Formala of wb,k+1} and \eqref{Form of Ak+1}, we can also obtain the estimates \eqref{Est of omega e,k+1} and \eqref{Est of Ak+1} for $(\rw_{e,k+1},\rA_{k+1}, \rw_{b,k+1})$ as follows 
		\begin{equation}\label{Est of rw,rAk+1}
			\begin{aligned}
				&\norm{\rw_{e,k+1}}_{X_0\cap X_{\alpha}}\le C\big((1+M^2)\norm{\rh_k}_{L^2(\Omega_M)} + \norm{|\p_x|^{\frac{\alpha}{2}}\rh_k}_{L^2(\Omega_M)}\big),\\
				&\norm{\rA_{k+1}}_{X_{\alpha,\infty}}\le C\norm{\rw_{e,k+1}}_{X_0\cap X_{\alpha}},\\
				&\norm{\rw_{b,k+1}}_{X_0\cap X_{\alpha}}\le C\norm{\rw_{e,k+1}}_{X_0\cap X_{\alpha}}.
			\end{aligned}
		\end{equation}
		
		In view of Remark \ref{rem for forced terms}, we have
		\begin{equation}\label{Est of rhk}
			\begin{aligned}
				&\norm{\rh_k}_{L^2}\le CM^{\frac{1+3\e}{2}}\big(\norm{\rw_k}_{X_0\cap X_{\alpha}} + \norm{\rA_k}_{X_{\alpha,\infty}}\big)\norm{\omega_0}_{X_0\cap X_{\alpha}},\\
				&\norm{|\p_x|^{\frac{\alpha}{2}}\rh_k}_{L^2}\le CM\norm{\rw_k}_{X_0\cap X_{\alpha}}^2 + CM\big(\norm{\rw_k}_{X_0\cap X_{\alpha}} + \norm{\rA_k}_{X_{\alpha,\infty}}\big)\norm{\omega_0}_{X_0\cap X_{\alpha}}\\
				&\qquad \le CM\big(\norm{\rw_k}_{X_0\cap X_{\alpha}} + \norm{F}_{H^2(\R)}\big)\big(\norm{\rw_k}_{X_0\cap X_{\alpha}} + \norm{\rA_k}_{X_{\alpha,\infty}}\big).
			\end{aligned}
		\end{equation}
		It follows from \eqref{Est of rw,rAk+1} and \eqref{Est of rhk} that
		\begin{equation}\nonumber
			\begin{aligned}
				&\norm{\rw_{k+1}}_{X_0\cap X_{\alpha}} + \norm{\rA_{k+1}}_{ X_{\alpha,\infty}}\\
				&\le CM^{\frac{5+3\e}{2}}\big(\norm{\rw_{k}}_{X_0\cap X_{\alpha}} + \norm{\rA_{k}}_{ X_{\alpha,\infty}}\big)\big(\norm{\rw_{k}}_{X_0\cap X_{\alpha}} + \norm{\rA_{k}}_{ X_{\alpha,\infty}} + \norm{F}_{H^2(\R)}\big)\\
				&\le CM^{\frac{5+3\e}{2}}\Big[\norm{F}_{H^2(\R)}(\norm{\rw_{k}}_{X_0\cap X_{\alpha}} + \norm{\rA_{k}}_{ X_{\alpha,\infty}})
				+\big(\norm{\rw_{k}}_{X_0\cap X_{\alpha}} + \norm{\rA_{k}}_{ X_{\alpha,\infty}}\big)^2\Big].
			\end{aligned}
		\end{equation}
		By Lemma \ref{lem for iteration}, if $CM^{\frac{5+3\e}{2}}\norm{F}_{H^2(\R)}\le\frac{1}{2}$ and
		$$
		\norm{\rw_{1}}_{X_0\cap X_{\alpha}} + \norm{\rA_{1}}_{ X_{\alpha,\infty}}\le 2CM^{\frac{5+3\e}{2}}\norm{F}_{H^2(\R)}^2\le\frac{\frac{3}{4}-CM^{\frac{5+3\e}{2}}\norm{F}_{H^2(\R)}}{CM^{\frac{5+3\e}{2}}\norm{F}_{H^2(\R)}},
		$$
		which means that $\norm{F}_{H^2(\R)}\le \frac{1}{2CM^{\frac{5+3\e}{2}}}$ for $M\gg1$, then there holds
		$$\begin{aligned}
			\sum_{k\ge1}\big(\norm{\rw_{k}}_{X_0\cap X_{\alpha}} + \norm{\rA_{k}}_{ X_{\alpha,\infty}}\big)<\infty.
		\end{aligned}$$
		Since $\bar{\omega}_k = \bar{\omega}_0 + \sum_{j=1}^{k}\rw_j,\ \bar{A}_k = \bar{A}_0 + \sum_{j=1}^{k}\rA_j$, we conclude that $\{(\bar{\omega}_k, \bar{A}_k)\}_{k\ge0}$ converges to some pair $(\bar{\omega},\bar{A})$ in $(X_0\cap X_{\alpha})\times  X_{\alpha,\infty}$. \smallskip
		
		\textbf{Step 3}. Uniqueness of the solution $(\omega_0 + \bar{\omega},A_0 + \bar{A})$ in $\cX_{\alpha,M}$.\smallskip
		
		By Theorem \ref{Main thm0-0}, we know $(\omega_0,A_0)\in\cX_{\alpha,M}$. Combing \eqref{Formala of we,k+1}-\eqref{Formala of wb,k+1} with the convergence of $\{(\bar{\omega}_{k},\bar{A}_{k})\}_{k\ge0}$ in $X_0\cap X_{\alpha}\times X_{\alpha,\infty}$, we conclude that $(\bar{\omega},\bar{A})\in (X_0\cap X_{\alpha})\times X_{\alpha,\infty}$ and 
		\begin{equation}\nonumber
			\begin{aligned}
				\hat{\bar{\omega}}(\xi,y)\big|_{y=M}& = \hat{\bar{\omega}}_{b}(\xi,y)\big|_{y=M}\\
				&= \left\{\begin{aligned}
					&\frac{Ai(e^{\frac{\pi}{6}i}\xi^{\frac{1}{3}}M)}{Ai'(0)e^{\frac{\pi}{6}i}\xi^{\frac{1}{3}}}i\xi|\xi|\hat{\bar{A}}_{k+1}(\xi),\quad \xi>0,\\
					&\frac{Ai(e^{-\frac{\pi}{6}i}(-\xi)^{\frac{1}{3}}M)}{Ai'(0)e^{-\frac{\pi}{6}i}(-\xi)^{\frac{1}{3}}}i\xi|\xi|\hat{\bar{A}}_{k+1}(\xi),\quad \xi<0.
				\end{aligned}\right.
			\end{aligned}
		\end{equation}
		This shows that $(\bar{\omega},\bar{A})\in\cX_{\alpha,M}$.
		Moreover, it is easy to verify that $(\omega_0+\bar{\omega},A_0+\bar{A})\in \cX_{\alpha,M}\setminus\{0\}$ due to $F\not\equiv0$, 
		which implies that the space $\cX_{\alpha,M}$ is non-trivial. 
		
		Let $(\omega_i,A_i)\ (i=1,2)\in \cX_{\alpha,M}$ be two solutions to the perturbation system \eqref{Perturbation system0}. 
		Set $\tilde{\omega} = \omega_1 - \omega_2,\ \tilde{A} = A_1-A_2$, which solve
		\begin{equation}\nonumber
			\left\{\begin{aligned}
				&y\p_x\tilde{\omega} - \p_y^2\tilde{\omega} = -u_1\p_x\tilde{\omega} - v_1\p_y\tilde{\omega} - \tilde{u}\p_x\omega_2 - \tilde{v}\p_y\omega_2\triangleq \tilde{h},\\
				&\tilde{u} = I_y[\tilde{\omega}],\quad \tilde{v} = -\p_xI_y[\tilde{u}],\\
				&\p_y\tilde{\omega}\big|_{y=0} = \p_x|\p_x|\tilde{A},\quad \tilde{\omega}\big|_{y=M} = \frac{Ai(M\p_x^{\frac{1}{3}})}{Ai'(0)}\p_x^{\frac{2}{3}}|\p_x|\tilde{A},\quad I_M[\tilde{\omega}] = \tilde{A}.
			\end{aligned}\right.
		\end{equation}
		As in the proof of Proposition \ref{prop interation}, we decompose $\tilde{\omega} = \tilde{\omega}_e + \tilde{\omega}_b$ with
		\begin{equation}\nonumber
			\left\{\begin{aligned}
				&y\p_x\tilde{\omega}_e - \p_y^2\tilde{\omega}_e = \tilde{h},\\
				&\p_y\tilde{\omega}_e\big|_{y=0} = 0,\quad \tilde{\omega}_e\big|_{y=M} = 0,
			\end{aligned}\right.
		\end{equation}
		and 
		\begin{equation}\nonumber
			\left\{\begin{aligned}
				&y\p_x\tilde{\omega}_b - \p_y^2\tilde{\omega}_b = 0,\\
				&\p_y\tilde{\omega}_b\big|_{y=0} = \p_x|\p_x|\tilde{A},\quad I_M[\tilde{\omega}_e+\tilde{\omega}_b] = \tilde{A},\\
				&\tilde{\omega}\big|_{y=M} = \frac{Ai(M\p_x^{\frac{1}{3}})}{Ai'(0)}\p_x^{\frac{2}{3}}|\p_x|\tilde{A}.
			\end{aligned}\right.
		\end{equation}
		By Proposition \ref{lem for nonlinear terms}, we have
		\begin{equation}\label{Estimate of baromegae}
			\begin{aligned}
				\norm{\tilde{\omega}_e}_{X_0\cap X_{\alpha}}\le&\ CM^2\norm{\tilde{h}}_{L^2(\Omega_M)} + C\norm{\tilde{h}}_{\dot{H}_x^{\frac{\alpha}{2}}L_y^2}.
			\end{aligned}
		\end{equation}
		
		By Proposition \ref{lem for boundary data}, we get
		\begin{equation}\label{Formula of hatomegab}
			\begin{aligned}
				\hat{\tilde{\omega}}_b(\xi,y) = C(\xi)\big(\sqrt{3}Ai((i\xi)^{\frac{1}{3}}y) + Bi((i\xi)^{\frac{1}{3}}y)\big)+\frac{Ai((i\xi)^{\frac{1}{3}}y)}{Ai'(0)(i\xi)^{\frac{1}{3}}}i\xi|\xi|\hat{\tilde{A}}.
			\end{aligned}
		\end{equation}
		In view of $\hat{\tilde{\omega}}_e\big|_{y=M} = 0$, we have 
		\begin{equation}\nonumber
			\hat{\tilde{\omega}}_b\big|_{y=M} = \hat{\tilde{\omega}}\big|_{y=M} = \frac{Ai((i\xi)^{\frac{1}{3}}M)}{Ai'(0)}(i\xi)^{\frac{2}{3}}|\xi|\hat{\tilde{A}},
		\end{equation}
		which along with \eqref{Formula of hatomegab} gives
		$$
		C(\xi) = 0.
		$$
		Thus, we have
		\begin{equation}\nonumber
			\begin{aligned}
				\hat{\tilde{\omega}}_b(\xi,y) = \frac{Ai((i\xi)^{\frac{1}{3}}y)}{Ai'(0)(i\xi)^{\frac{1}{3}}}i\xi|\xi|\hat{\tilde{A}}.
			\end{aligned}
		\end{equation}
		This formula is identical to \eqref{Explicit form of wb}. We then get by Proposition \ref{lem for boundary data} that
		\begin{equation}\label{Estimate of baromegab}
			\begin{aligned}
				\norm{\tilde{\omega}_b}_{X_0\cap X_{\alpha}}\le C\norm{|\p_x|^{\frac{5}{6}}\tilde{A}}_{H^{\frac{4}{3}+\frac{\alpha}{2}}}\le C\norm{\tilde{A}}_{X_{\alpha,\infty}}.
			\end{aligned}
		\end{equation}
		
		Thanks to $I_M[\tilde{\omega}_e + \tilde{\omega}_b] = \tilde{A}$, we have
		\begin{equation}
			\tilde{A} = m(-i\p_x)^{-1}I_M[\tilde{\omega}_e].\nonumber
		\end{equation}
		It follows from Lemma \ref{prop for IMf} that
		\begin{equation}\label{Estimate of barA}
			\norm{\tilde{A}}_{X_{\alpha,\infty}}\le C\norm{\tilde{\omega}_e}_{X_0\cap X_{\alpha}}.
		\end{equation}
		By Remark \ref{rem for forced terms}, we obtain
		\begin{equation}\label{Estimate of barh}
			\begin{aligned}
				&\norm{\tilde{h}}_{L^2}\le\ CM^{\frac{1+3\e}{2}}\big(\norm{\omega_1}_{X_0\cap X_{\alpha}} + \norm{\tilde{A}}_{ X_{\alpha,\infty}} + \norm{F}_{H^2(\R)}\big)\norm{\tilde{\omega}}_{X_0\cap X_{\alpha}}\\
				&\qquad\qquad\quad+ CM^{\frac{1+3\e}{2}}\big(\norm{\tilde{\omega}}_{X_0\cap X_{\alpha}} + \norm{\tilde{A}}_{ X_{\alpha,\infty}}\big)\norm{\omega_2}_{X_0\cap X_{\alpha}},\\
				&\norm{\tilde{h}}_{\dot{H}_x^{\frac{\alpha}{2}}L_y^2}\le CM\big(\norm{\omega_1}_{X_0\cap X_{\alpha}} + \norm{\tilde{A}}_{ X_{\alpha,\infty}} + \norm{F}_{H^2(\R)}\big)\norm{\tilde{\omega}}_{X_0\cap X_{\alpha}}\\
				&\qquad\qquad\quad + CM\big(\norm{\tilde{\omega}}_{X_0\cap X_{\alpha}} + \norm{\tilde{A}}_{ X_{\alpha,\infty}}\big)\norm{\omega_2}_{X_0\cap X_{\alpha}}.
			\end{aligned}
		\end{equation}
		On the other hand, we know that
		\begin{equation}\label{Linf estimate of barA}
			\begin{aligned}
				\norm{\tilde{A}}_{L^{\infty}(\R)}\le \norm{\tilde{\omega}}_{L_x^{\infty}L_y^1}\le \norm{\tilde{\omega}}_{X_0\cap X_{\alpha}}.
			\end{aligned}
		\end{equation}
		
		Summing up \eqref{Estimate of baromegae}, \eqref{Estimate of baromegab}, \eqref{Estimate of barA}, \eqref{Estimate of barh}  and \eqref{Linf estimate of barA}, we obtain
		\begin{equation}\nonumber
			\begin{aligned}
				&\norm{(\tilde{\omega},\tilde{A})}_{\cX_{\alpha,M}}\\
				&\le CM^{\frac{5+3\e}{2}}\norm{(\tilde{\omega},\tilde{A})}_{\cX_{\alpha,M}}\big(\norm{\omega_1}_{X_0\cap X_{\alpha}} + \norm{\omega_2}_{X_0\cap X_{\alpha}} + \norm{\tilde{A}}_{ X_{\alpha,\infty}} + \norm{F}_{H^2(\R)}\big)\\
				&\le CM^{\frac{5+3\e}{2}}\norm{(\tilde{\omega},\tilde{A})}_{\cX_{\alpha,M}}\big(\norm{(\omega_1,\tilde{A})}_{\cX_{\alpha,M}} + \norm{(\omega_2,A_2)}_{\cX_{\alpha,M}} + \norm{F}_{H^2(\R)}\big).
			\end{aligned}
		\end{equation}
		This implies the uniqueness of solution $(\omega,A)$ in $\cX_{\alpha,M}$ to \eqref{Perturbation system0} when $\norm{(\omega,A)}_{\cX_{\alpha,M}} + \norm{F}_{H^2(\R)}\le \frac{\delta_0}{M^{\frac{5+3\e}{2}}}$ with $\delta_0=\delta_0(\alpha,\e)\ll1$.
	\end{proof}

	\appendix	
	
	\section{The physical derivation of triple-deck equation}

	This appendix provides a brief overview of the triple-deck formalism, based on the physical model of an incompressible boundary layer with a roughness element; for further details, see \cite{Smith1973}. The roughness is located  at a distance $L$ downstream from the leading edge. Taking $L$ as the reference length, the Reynolds number is defined as  $ \text{Re}=\frac{U_\infty L}{\nu}$, with $U_\infty$ and $\nu$ being the velocity of the incoming flow and the kinematic viscosity, respectively. For convenience, we introduce a small parameter $\ep=\text{Re}^{-\frac{1}{8}}\ll 1$. The system is described in the Cartesian coordinate $(X,Y)$, with its origin at the leading edge. The roughness height is assumed to be $\cO(\ep^5)$ or smaller, and its streamwise length scale is $\cO(\ep^3)$. Consequently, the roughness-induced  mean-flow distortion exhibits three distinct layers in the transverse direction, spanning in   the $\cO(\ep^3)$ vicinity of the roughness (near $X=1$). The three distinct layers are summarized as follows.\smallskip
	
	\par I. Viscous sublayer: Vertical scale \(\cO(\ep^5)\), streamwise scale \(\cO(\ep^3)\).
	\par II. Main layer: Vertical scale \(\cO(\ep^4)\), streamwise scale \(\cO(\ep^3)\).
	\par III. Upper layer: Vertical scale \(\cO(\ep^3)\), streamwise scale \(\cO(\ep^3)\).
	
	\medskip
	
	These scales arise from matched asymptotic expansions, connecting the triple-deck theory to classical boundary layer theory in the outer flow region. For clarity, the spatial arrangement of layers is illustrated in Figure \ref{fig3}, showing the viscous sublayer I adjacent to the wall and the upper layer III in the potential flow region.
	
	\begin{figure}[h]
		\centering
		\includegraphics[scale=0.27]{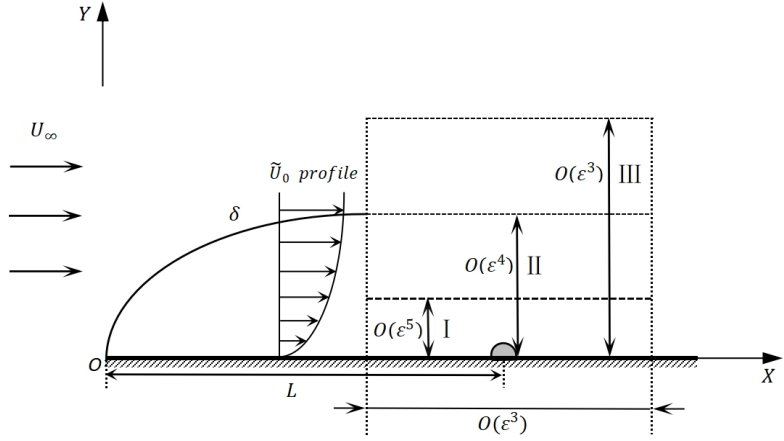}
		\caption{Demonstration of the triple-deck structure (not to scale): I   Viscous Sublayer; II  Main Layer;  III Upper Layer; $\delta$ Thickness of Prandtl layer; $\tilde{U}_0$ Streamwise velocity.}
		\label{fig3}
	\end{figure}
	
	It is well-known that the 2D steady incompressible Navier-Stokes equations take the following dimensionless form:
	\begin{equation}\label{Incompressible NS}
		\left\{\begin{aligned}
			&\fu_{NS}\cdot\nabla\fu_{NS} - \ep^8\Delta\fu_{NS} + \nabla p_{NS} = 0,\\
			&\nabla\cdot\fu_{NS} = 0.
		\end{aligned}\right.
	\end{equation}
	In the main layer (layer II), we introduce a pair of local  coordinates,
	$$
	x_* = \ep^{-3}(X-1)=\cO(1), \quad  \tilde{y} = \ep^{-4}Y = \cO(1).
	$$
	The flow field in this layer can be expressed as a sum of the Blasius base flow $\fu=(\tilde U_0(y),0)+\cO(\ep^4)$ and a mean-flow distortion,
	\begin{equation}\label{Main expansion}
		\begin{aligned}
			u_{NS}(X,Y;\ep) =&\ \tilde{U}_0(\tilde{y}) + \ep\tilde{u}(x_*,\tilde{y}) + o(\ep),\\
			v_{NS}(X,Y;\ep) =&\ \ep^2\tilde{v}(x_*,\tilde{y}) + o(\ep^2),\\
			p_{NS}(X,Y;\ep) =&\ \ep^2\tilde{p}(x_*,\tilde{y}) + o(\ep^2).
		\end{aligned}
	\end{equation}
	Substituting the expansions \eqref{Main expansion} into \eqref{Incompressible NS} and collecting the leading-order terms, we derive the following system
	\begin{equation}
		\left\{\begin{aligned}
			\tilde{U}_0\p_{x_*}\tilde{u} + \tilde{U}_0'\tilde{v} =&\ 0,\\
			\p_{\tilde{y}}\tilde{p} =&\ 0,\\
			\p_{x_*}\tilde{u} + \p_{\tilde{y}}\tilde{v} =&\ 0.
		\end{aligned}\right.
	\end{equation}
	The solutions to this system are
	\begin{equation}\label{Main layer sol1}
		\begin{aligned}
			\tilde{u} = \lambda^{-1} A_*(x_*)\tilde{U}_0'(\tilde{y}),\quad
			\tilde{v} = -\lambda^{-1}A_*'(x_*)\tilde{U}_0(\tilde{y}),\quad
			\tilde{p} = \tilde{p}(x_*).
		\end{aligned}
	\end{equation}
	Here $\lambda:=\tilde{U}_0'(0)>0$ and $A_*$ is an unknown function (typically referred to as a displacement of velocity) satisfying $A_*(-\infty)=0$.
	
	As $\tilde{y}\to\infty$, we find that $\tilde{v}$ approaches $-\lambda^{-1}A_*'(x_*)$, which does not satisfy   the far-field attenuation condition, $\lim\limits_{Y\uparrow\infty}v_{NS}(X,Y)=0$. This indicates the emergence of an upper layer (layer III in Figure \ref{fig3}).  In this upper layer, we introduce the local coordinate
	$$
	\bar{y} = \ep^{-3}Y = \cO(1).
	$$
	Matching with the upper limit of the main-layer solutions, we can express the flow field as
	\begin{equation}\label{Upper expansion}
		\begin{aligned}
			u_{NS}(X,Y;\ep) =&\ 1 + \ep^2\bar{u}(x_*,\bar{y}) + o(\ep^2),\\
			v_{NS}(X,Y;\ep) =&\ \ep^2\bar{v}(x_*,\bar{y}) + o(\ep^2),\\
			p_{NS}(X,Y;\ep) =&\ \ep^2\bar{p}(x_*,\bar{y}) + o(\ep^2).
		\end{aligned}
	\end{equation}
	Substituting the expansions \eqref{Upper expansion} into \eqref{Incompressible NS} and retaining the leading-order terms, we derive
	\begin{equation}\label{C-R for barp barv}
		\left\{\begin{aligned}
			\p_{x_*}\bar{u} + \p_{x_*}\bar{p} =&\ 0,\\
			\p_{x_*}\bar{v} + \p_{\bar{y}}\bar{p} =&\ 0,\\
			\p_{x_*}\bar{u} + \p_{\bar{y}}\bar{v} =&\ 0.
		\end{aligned}\right.
	\end{equation}
	The far-field attenuation condition indicates
	\begin{equation}
		\begin{aligned}
			\lim_{\sqrt{x_*^2 + \bar{y}^2}\to\infty}(\bar u,\bar v, \bar p) = \mathbf{0}.
		\end{aligned}
	\end{equation}
	Matching with the main layer,    we derive
	\begin{equation}\label{BC2}
		\begin{aligned}
			\bar{v}(x_*, 0) = \lim_{\tilde{y}\uparrow\infty}\tilde{v} = -\lambda^{-1}A_*'(x_*),\quad \bar{p}(x_*, 0) = \tilde{p}(x_*).
		\end{aligned}
	\end{equation}
	Since $(\bar{p}, \bar{v})$ satisfies the Cauchy-Riemann equations (cf. \eqref{C-R for barp barv}), let us define $f(z) = \bar{p}(x_*\bar{y}) + i\bar{v}(x_*,\bar{y})$ with $z = x_* + i\bar{y}$. Then, we obtain
	\begin{equation}\label{Sol0}
		\begin{aligned}
			f(z) =&\ \frac{1}{2\pi i}\int_{\R}\frac{f(\zeta) - \overline{f(\zeta)}}{\zeta - z}d\zeta =\ -\frac{1}{\pi\lambda}\int_{\R}\frac{A_*'(\zeta)}{\zeta - z}d\zeta,\quad \forall\ \text{Im }z>0.
		\end{aligned}
	\end{equation}
	This implies that
	\begin{equation}\label{BC3}
		\tilde p(x_*)= \bar{p}(x_*,0) = \lim_{\text{Im}(z) \to 0^+}\text{Re}\ f(z) = -\frac{1}{\pi\lambda}\text{P.V.}\int_{\R}\frac{A_*'(\zeta)}{\zeta - x_*} d\zeta.
	\end{equation}
	
	Now let us consider the lower boundary of the main-layer solution.
	As $\tilde{y}\to0^+$, we find that $\tilde{u}$ approaches $A_*(x_*)$, disagree with the no-slip condition. Thus, we need to consider the viscous sublayer (layer I in Figure \ref{fig3}). 
	For convenience, we introduce
	$$
	y_* = \ep^{-5}Y = \cO(1).
	$$
	Analysis of the Blasius equation leads to   $\tilde{U}_0\to \lambda \tilde{y} +\cO(\tilde{y}^4)$ as $\tilde{y}\to 0^+$. Thus, the main-layer solution behaves like $u_{NS}=\bar U_0+\epsilon\tilde u+o(\ep)\to \ep(\lambda y_*+A_*)+o(\ep)$, which determines the magnitude of the sublayer-layer solution. From the balance of the NS equations, we can expand the solution as,
	\begin{equation}\label{Viscous expansion}
		\begin{aligned}
			u_{NS}(X,Y;\ep) =&\ \ep u_*(x_*,y_*) + o(\ep),\\
			v_{NS}(X,Y;\ep) =&\ \ep^3 v_*(x_*,y_*) + o(\ep^3),\\
			p_{NS}(X,Y;\ep) =&\ \ep^2P_*(x_*) + o(\ep^2).
		\end{aligned}
	\end{equation}
	Here, we have imposed the fact that, to leading order, the pressure remains uniform across the thin sublayer. 
	Substituting \eqref{Viscous expansion} into \eqref{Incompressible NS} and collecting the leading-order terms, we derive the following system:
	\begin{equation}\label{Eq in viscous layer}
		\left\{\begin{aligned}
			u_*\p_{x_*}u_* + v_*\p_{y_*}u_* + \frac{dP_*}{dx_*} - \p_{y_*}^2u_* =&\ 0,\\
			\p_{x_*}u_* + \p_{y_*}v_* =&\ 0.
		\end{aligned}\right.
	\end{equation}
	The velocity field is subject to the no-slip, non-penetration conditions,
	\begin{equation}\label{BC0}
		[u_*,v_*]\big|_{y_* = F(x_*)} = 0.
	\end{equation}
	In the upstream limit, the velocity approaches the unperturbed Blasius profile,
	\begin{equation}\label{BC1}
		(u_* - \lambda y_*)\big|_{x_*\downarrow-\infty} = 0,\quad P_*\big|_{x_*\downarrow-\infty} = 0.
	\end{equation}
	
	The upper boundary condition of the sublayer equations should be obtained by matching with the main layer. Now, we consider the overlapping region connecting the sublayer and main layer, where $y_* = \cO(\ep^{-\alpha_0})$ with $\alpha_0\in (0,1)$.  Then for any $\beta_0>0$,  we have
	\begin{equation}\label{Viscous-main compatible}
		\begin{aligned}
			u_{NS}=&\ \lambda\tilde{y} + \ep\tilde{u}(x_*,0) + o(\ep+\tilde{y}^{4-\beta_0}) = \lambda\tilde{y} + \ep A_*(x_*) + o(\ep + \tilde{y}^{4-\beta_0})\\
			=&\ \ep\big(\lambda y_* + A_*(x_*)\big) + o\big(\ep + \ep^{(4-\beta_0)(1-\alpha_0)}\big).
		\end{aligned}
	\end{equation}
	To ensure the second part to be negligible, we require $(4-\beta_0)(1-\alpha_0)\geq 1$, namely, $\alpha_0\in(0,3/4)$ and $\beta_0\in(0,4)$. Likewise, we can derive that the leading-order pressure in the sublayer and that in the main layer are equal. Subsequently, we obtain
	\begin{equation}\label{B.C. for viscous layer}
		\big(u_*(x_*,y_*)-\lambda y_*\big)\big|_{y_*\uparrow\infty} = A_*(x_*),\quad \tilde{p}(x_*) = P_*(x_*).
	\end{equation}
	From \eqref{Eq in viscous layer}-\eqref{BC1} and \eqref{B.C. for viscous layer}, we can derive the classical complete system in viscous sublayer as follows
	\begin{equation}\label{Viscous sublayer system0}
		\begin{aligned}
			\left\{\begin{aligned}
				&u_*\p_{x_*}u_* + v_*\p_{y_*}u_* + \frac{dP_*}{dx_*} - \p_{y_*}^2u_* = 0,\quad  (x_*,y_*)\in\Omega_{\infty,F},\\
				&\p_{x_*}u_* + \p_{y_*}v_* = 0,\quad  (x_*,y_*)\in\Omega_{\infty,F},\\
				&[u_*,v_*]\big|_{y_*=F(x_*)} = 0,\quad (u_*-\lambda y_*)\big|_{y_*\uparrow\infty} = A_*(x_*),\quad x\in\R,\\
				&(u_*-\lambda y_*)\big|_{x_*\downarrow-\infty} = 0,\quad y_*\in\R_+,\\
				&P_*(x_*) = -\frac{1}{\pi\lambda}\text{P.V.}\int_{\R}\frac{A_*'(\zeta)}{\zeta-x_*}d\zeta,\quad x\in\R.
			\end{aligned}\right.
		\end{aligned}
	\end{equation}
	Here $\Omega_{\infty,F} = \big\{(x_*,y_*)\big|\ x_*\in\R,\ y_*>F(x_*)\big\}$ and $\lim_{x_*\downarrow-\infty} F(x_*)=0$.
	
	Indeed, the above nonlinear system is usually solved using numerical approach.   In light of \eqref{Viscous-main compatible}, it is permissible to modify the upper boundary of the domain $\Omega_{\infty,F}$ to be a curve $\gamma=\{(x_*,y_*)|\ y_* = f(x_*)\in \cO(\ep^{-\alpha_0}),\ x_*\in\R\}$. Consequently, the solution $\phi_*=(u_*,v_*,P_*)$  should be denoted by $\phi_{*,\ep}$ to include the impact of the upper boundary. It is clear that $\phi_{*,\ep}$ also satisfy the system \eqref{Viscous sublayer system0}, except for the upper boundary condition \eqref{B.C. for viscous layer}, which is converted to
	\begin{equation}\label{B.C. for viscous layer bounded}
		( u_{*,\ep}(x_*,y_*)-\lambda y_*)\big|_{y_*=f(x_*)} = A_*(x_*).
	\end{equation}
	When there is no ambiguity, we still denote $\phi_{*,\ep}$ as $\phi_*$ for simplicity.

	There is a useful operation named the Prandtl transformation for solving the system \eqref{Viscous sublayer system0}, which takes the following form: 
	\begin{equation}
		\begin{aligned}
			&y = y_* - F(x_*),\ v_*=V+u_*F'(x_*).
		\end{aligned}
	\end{equation}
	This transformation merely shifts down the domain by a distance $F(x_*)$ in the $y$-direction, with the governing equations being unchanged. It is preferable to take $f(x_*) = \e^{-\alpha_0} + F(x_*)$,
	then, the boundary conditions in the transverse direction are changed to
	\begin{equation}
		(u_*,V)\Big|_{y=0}=0,\quad (u_*-\lambda y)\Big|_{y=\ep^{-\alpha_0}}=A_*+\lambda F.
	\end{equation}  
	Typically, we take $\lambda=1$ in this paper.

	\section*{Acknowledgments} 
	
	M. Dong is partially supported by the NSFC under Grant No. 12588201 and the CAS Project for Young Scientists in Basic Research under Grant No. YSBR-087. C. Wang is partially supported by the NSFC under Grant No. 12471189. Z. Zhang is partially supported by the NSFC under Grant No. 12288101.
	
	\bigskip
	{\bf Data availability} The manuscript has no associated data.
	
	\bigskip
	
	{\bf Declarations}
	
	\bigskip
	
	{\bf Conflict of interest} The authors state that there is no conflict of interest.

\end{document}